\newcommand{\N}{\ensuremath{\mathbb{N}}}
\newcommand{\R}{\ensuremath{\mathbb{R}}}
\renewcommand{\epsilon}{\varepsilon}
\renewcommand{\geq}{\geqslant}
\renewcommand{\leq}{\leqslant}
\newtheorem{thm}{Theorem}[section]
\newtheorem{lem}[thm]{Lemma}
\newtheorem{prop}[thm]{Proposition}
\newtheorem{rem}[thm]{Remark}
\title[Weak repulsive singularity]
{Periodic bouncing solutions of the Lazer-Solimini equation with weak repulsive singularity}
\author[D. Rojas \and P.J. Torres]{David Rojas$^1$ \and Pedro J. Torres$^2$}
\address{$^1$ Departament d' Informàtica, Matemàtica Aplicada i Estadística, Universitat de Girona, 17003  Girona, Spain}
\email{david.rojas@udg.edu}
\address{$^2$ Departamento de Matemática Aplicada, Universidad de Granada, 18071 Granada, Spain} \email{ptorres@ugr.es}
\subjclass[2010]{Primary: 34C25.}
\keywords{Periodic solution, bouncing, singularity, Poincar\'e-Birkhoff Theorem}
\begin{document}

\begin{abstract}
We prove the existence and multiplicity of periodic solutions of boun\-cing type for a second-order differential equation with a weak repulsive singularity. Such solutions can be catalogued according to the minimal period and the number of elastic collisions with the singularity in each period. The proof relies on the Poincar\'e-Birkhoff Theorem. 
\end{abstract}

\maketitle

\section{Introduction} \label{sec:intro}

Differential equations with singularities appear as mathematical models in many scientific areas and have been studied from many viewpoints \cite{To}. In this paper, we consider the singular second order differential equation 
\begin{equation}\label{eq}
    \ddot u - \frac{1}{u^{\alpha}}=p(t), \ u>0,
\end{equation}
with parameter $\alpha>0$ and $p:\R\rightarrow\R$ a continuous and $2\pi$-periodic function. In a seminal paper, Lazer and Solimini~\cite{Lazer} proved that when $\alpha\geq 1$ equation~\eqref{eq} has a positive periodic solution if and only if $p$ has negative mean value. The authors also showed that the statement is sharp with respect to the parameter $\alpha$ in the sense that if $0<\alpha<1$, a function $p$ with negative mean value can be constructed in such a way that $\eqref{eq}$ has no periodic solutions. Later,  \cite[Example 3.9]{RTV} provided an effective sufficient condition over $p$ for the existence of a classical periodic solution in the weak repulsive case. The particular case $\alpha=1/2$ has been studied in~\cite{Rojas} showing that the equation corresponds to a perturbed isochronous oscillator and resonance conditions on the forcing term $p(t)$ are given.

In the mentioned references, existence of solutions is understood in the classical sense and collisions with the singularity are not allowed. The goal of the present paper is twofold. First, we aim to extend the notion of solutions of equation~\eqref{eq} for $0<\alpha<1$ admitting elastic collisions with the singularity at $x=0$. Second, we prove the existence of harmonic and sub-harmonic bouncing solutions of equation~\eqref{eq} for any negative $2\pi$-periodic forcing $p(t)$.

For the analogous equation with attractive nonlinearity (that is, changing the sign of the second term of the left-hand side of the equation), the notion of bouncing solution has been adequately defined and studied in a number of papers \cite{Or,QT,RS,RT,SQ,Tomecek,Zhao}. In contrast, it remains unexplored for the repulsive case. Our aim is to fill, at least partially, this gap. 

The structure of the paper is as follows. In Section 2, we analyze in detail the autonomous case (when the forcing term $p(t)$ is constant), including the associated period function and the continuation of colliding orbits. In Section 3, we define rigurously the notion of bouncing solution, proving that the initial boundary value problem is well-defined and continuable to the whole real line. Section 4 begins with the definition of the so-called successor map, which is a section of the flux whose fixed points are equivalent to periodic solutions of the equation. It can be proved that this map is area-preserving and a suitable version of the Poincar\'e-Birkhoff Theorem can be applied by using the estimates from Section 2, leading to the main results. 

\section{The integrable weak-singular system}

Throughout this section we consider a general potential function $V\in C^2(I)$ defined in an open interval $I=(\alpha,+\infty)$ satisfying
\[
\lim_{u\rightarrow \alpha^+}V(u)=h^*\ \text{ and }\ \lim_{u\rightarrow \alpha^+}V'(u)=-\infty.
\]
Additionally we assume that $u=0$ is the only local minimum of $V$. More precisely,
\[
V(0)=V'(0)=0,\ V''(0)>0\ \text{ and }\ uV'(u)>0 \text{ if }u\neq 0.
\]
Under these hypothesis it is clear that there exists $\beta>0$ with 
\[
\lim_{u\rightarrow \beta}V(u)=\lim_{u\rightarrow \alpha^+}V(u)=h^*>0,
\]
and the equation 
\begin{equation}\label{sode}
\ddot u+V'(u)=0, \ u\in I,
\end{equation}
has a center at the origin with a bounded period annulus, namely $\mathscr{P}$, which projection over the $u$-axis is the open interval $(\alpha,\beta)$ (see Figure~\ref{fig1}.) The associated first order differential system is a Hamiltonian system with Hamiltonian function $H(u,\dot u)=\frac{1}{2}\dot u^2+V(u)$. In particular, the energy at the outer boundary of the period annulus is $h^*$.
From the first integral $H$ we have that periodic orbits inside the period annulus correspond to energy levels $h\in(0,h^*)$. On the other hand, energies greater than $h^*$ correspond to solutions that collide with the singularity. More precisely, if $u(t)$ is a solution of~\eqref{sode} with $H(u,\dot u)=h\geq h^*$ and initial condition $\dot u(0)=0$, there exists $t_0>0$ such that $\lim_{t\rightarrow t_0^-}u(t)=\lim_{t\rightarrow -t_0^+}u(t)=\alpha.$ Moreover, by conservation of energy the limits $\lim_{t\rightarrow t_0^-}\dot u(t)=\dot u(t_0^-)$ and $\lim_{t\rightarrow -t_0^+}\dot u(t)=\dot u(-t_0^+)$ exist and the equality $\dot u(t_0^-)=-\dot u(-t_0^+)$ holds. That is, the collision with the singularity can be interpreted as an elastic collision. In particular, one can understand solutions with collisions as generalized periodic solutions, also known as bouncing periodic solutions. Indeed, the continuation is done by taking $\dot u(t_0^+)=-\dot u(t_0^-).$

\begin{figure}
    \centering
    \includegraphics[scale=1]{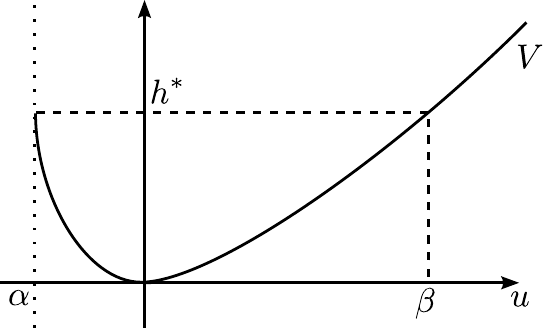} \hspace*{1cm}
    \includegraphics[scale=1]{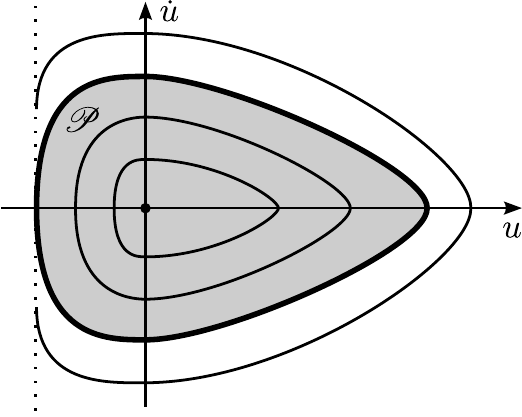}
    \caption{\label{fig1}On the left, potential function with a weak singularity at $x=\alpha$. On the right, the phase portrait of the potential system. The grey region corresponds to the period annulus. Its outer boundary is emphasized in bold.}
\end{figure}

\subsection{The period function and its extension to bouncing solutions}

The previous discussion shows that all solutions of equation~\eqref{sode} (classical and bouncing type) are periodic. The period function parametrized by the energy, $T_p:(0,h^*)\longrightarrow (0,+\infty)$, is the function that, for each $0<h<h^*$, assigns the period of the periodic solution $u(t)$ inside the energy level $h=H(u,\dot u)$. Due to the symmetry of the system with respect to the $u$-axis the function $T_p(h)$ is given by
\[
T_p(h)=\sqrt{2}\int_{u^-(h)}^{u^+(h)}\frac{du}{\sqrt{h-V(u)}},\ h\in(0,h^*),
\]
where $\alpha<u^{-}(h)<0<u^+(h)$ are the negative and positive solution of the equality $h-V(u)=0$. That is, the endpoints of the projection of the orbit over the $u$-axis.

When $h\geq h^*$ solutions of equation~\eqref{sode} are of bouncing type. In this case, we can define the time between two consecutive collisions of the solution $u(t)$ as
\[
T_b(h)=\sqrt{2}\int_{\alpha}^{u^+(h)}\frac{du}{\sqrt{h-V(u)}}, \ h\geq h^*.
\]
The function $T_b$ can be understood as an extension of the period function $T_p$ outside the period annulus. Therefore we define the function
\[
T(h)=\begin{cases}
T_p(h) & \text{if } 0<h<h^*,\\
T_b(h) & \text{if } h\geq h^*,
\end{cases}
\]
as the extended period function.

\begin{lem}\label{lema:period_C1}
The function $T(h)$ is $C^1(0,+\infty)$.
\end{lem}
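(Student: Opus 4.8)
The plan is to split every orbit at the centre $u=0$ and analyse the two half‑periods separately. Since $uV'(u)>0$ for $u\neq0$, the restrictions of $V$ to $[0,+\infty)$ and to $(\alpha,0]$ are strictly monotone; let $\rho$ and $\phi$ be the respective inverses, so that $u^+(h)=\rho(h)$ for every $h>0$ and $u^-(h)=\phi(h)$ for $h<h^*$. The substitution $v=V(u)$ in each half turns the extended period function into
\[
T(h)=\sqrt2\int_0^{h}\frac{\rho'(v)}{\sqrt{h-v}}\,dv\;+\;\sqrt2\int_0^{\min\{h,h^*\}}\frac{-\phi'(v)}{\sqrt{h-v}}\,dv ,\qquad h>0 ,
\]
the first summand being the contribution of the right branch (which never reaches the singularity, so its upper endpoint is always the turning point), and the second that of the left branch (whose upper endpoint freezes at $h^*$ once the orbit begins to collide). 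As $v\to0^{+}$ both $\rho'(v)$ and $-\phi'(v)$ equal $\bigl(2V''(0)v\bigr)^{-1/2}\bigl(1+o(1)\bigr)$, the standard desingularisation at a non‑degenerate centre; so after the rescaling $v=h\sigma$ (respectively $v=h^*\sigma$) the factor $v^{-1/2}$ is absorbed into $\sigma^{-1/2}(1-\sigma)^{-1/2}$, leaving integrands that are $C^1$ in $h$ with $h$‑derivative bounded by an integrable function of $\sigma$.

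From this, $T\in C^1(0,h^*)$ and $T\in C^1(h^*,+\infty)$ follow by the classical period‑function argument: on each of these open intervals the turning points are regular ($V'(u^+(h))>0$, and $V'(u^-(h))<0$ when $h<h^*$), the upper limit $\min\{h,h^*\}$ of the second integral lies in the interior of an interval on which $-\phi'$ is $C^1$, and one may differentiate under the integral sign after the desingularisation above.

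The whole difficulty is therefore the matching of the one‑sided derivatives of $T$ at $h=h^*$, and it concerns only the second summand
\[
G(h):=\sqrt2\int_0^{\min\{h,h^*\}}\frac{m(v)}{\sqrt{h-v}}\,dv ,\qquad m(v):=-\phi'(v)=\frac{1}{-V'(\phi(v))}\ \xrightarrow[v\to h^{*-}]{}\ 0 ,
\]
the first summand being smooth across $h^*$ because its upper limit $\rho(h)=u^+(h)$ stays near the regular turning point $\beta$. Peeling off a fixed interval $[0,h^*/2]$ near the centre, on which $G$ is manifestly $C^\infty$ near $h^*$, it remains to check that $\sqrt2\int_{h^*/2}^{\min\{h,h^*\}}m(v)(h-v)^{-1/2}\,dv$ is $C^1$ up to $h=h^*$. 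Its derivative equals $\sqrt2\,m(h^*/2)(h-h^*/2)^{-1/2}+\sqrt2\int_{h^*/2}^{h}m'(v)(h-v)^{-1/2}\,dv$ for $h<h^*$ and (with fixed limits) $-\tfrac{\sqrt2}{2}\int_{h^*/2}^{h^*}m(v)(h-v)^{-3/2}\,dv$ for $h>h^*$, and an integration by parts shows that these two expressions have a common limit at $h=h^*$ exactly when
\[
\int_{\alpha}^{0}\frac{du}{\bigl(h^*-V(u)\bigr)^{3/2}}<+\infty .
\]

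So the one step I expect to be the genuine obstacle is this last integrability near the singular boundary $u=\alpha$; everything else is routine bookkeeping with well‑known desingularisation tricks. I would deduce it from the asymptotics of $V$ near $\alpha$ forced by the hypotheses — quantifying the rates at which $V(u)\to h^*$ and $V'(u)\to-\infty$, so that $h^*-V(u)$ is comparable to a power $(u-\alpha)^{\gamma}$ with $\gamma$ in the admissible range — which makes the integral above converge. Once this is secured, the one‑sided derivatives of $G$, and hence of $T$, coincide at $h^*$, and together with the interior regularity we conclude $T\in C^1(0,+\infty)$.
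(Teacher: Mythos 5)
Your overall architecture is sound and runs parallel to the paper's: the paper substitutes $u=g^{-1}(\sqrt{h}\sin\theta)$ with $g=\sgn(u)\sqrt{V(u)}$ and shows that the boundary term in $T_b'(h)$ carries the factor $\lim_{z\to-\sqrt{h^*}}(g^{-1})'(z)=\lim_{u\to\alpha^+}2g(u)/V'(u)=0$, while you substitute $v=V(u)$ on each branch and use the same fact in the form $m(h^*)=-\phi'(h^*)=\lim 1/|V'|=0$. Up to and including the integration by parts that kills the boundary term at $v=h^*$, your computation is correct, and you have correctly identified that everything reduces to an integrability condition at the singular endpoint.

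The gap is in the last step. The condition
\[
\int_{\alpha}^{0}\frac{du}{\bigl(h^*-V(u)\bigr)^{3/2}}<+\infty
\]
does \emph{not} follow from the standing hypotheses ($V\in C^2$, $V(u)\to h^*$, $V'(u)\to-\infty$ as $u\to\alpha^+$). Those hypotheses do not force $h^*-V(u)$ to be comparable to a power of $u-\alpha$ at all, and even when it is, say $h^*-V(u)\sim c\,(u-\alpha)^{\gamma}$, they only force $0<\gamma<1$, whereas your integral converges precisely when $\gamma<2/3$. For instance $h^*-V(u)\sim c\,(u-\alpha)^{3/4}$ satisfies every hypothesis of Section~2 yet makes the integral diverge, and then your own formula shows that the one-sided derivative from the right, $-\tfrac{\sqrt2}{2}\int_{h^*/2}^{h^*}m(v)(h-v)^{-3/2}dv$, tends to $-\infty$ as $h\downarrow h^*$ (by monotone convergence), so no choice of argument can rescue $C^1$ regularity there: the obstruction is real, not an artefact of your method. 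Note moreover that for the potential actually used in the paper, $V(u)=-p_0u-u^{1-\alpha}/(1-\alpha)$, one has $\gamma=1-\alpha$ near the singularity, so the condition $\gamma<2/3$ reads $\alpha>1/3$; your argument therefore does not cover the full range $0<\alpha<1$ needed later. (To be fair, the paper's own proof faces the equivalent issue --- the convergence and continuity at $h=h^*$ of the integral in~\eqref{dTb}, whose integrand $-g''/(g')^3=2/V'-4VV''/(V')^3$ is unbounded near $u=\alpha$ in exactly the same regime --- and dismisses it without comment; you have located the crux correctly, but asserting that it ``follows from the asymptotics forced by the hypotheses'' is not a proof, and as stated it is false.) A secondary, related point: your left-hand derivative formula and the integration by parts also require $\int_{h^*/2}^{h^*}|m'(v)|(h^*-v)^{-1/2}dv<\infty$, i.e.\ control of $V''/(V')^3$ near the singularity, which again is not granted by $V\in C^2$ alone and should be stated as an explicit hypothesis or verified for the concrete potential.
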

\begin{proof}
The function $T_p$ is $C^1(0,h^*)$ by the classical theory of the period function. In addition, the function $T_b$ is $C^1(h^*,+\infty)$ since $V'(u)>0$ for all $u>0$ and $V(\alpha)<h^*$. Let us show that the result is true for $h=h^*$. To do so let us define 
\[
g(u)=\begin{cases}
u\left(\frac{V(u)}{u^2}\right)^{1/2} & \text{ if } u\neq 0,\\
0 & \text{ if } u=0.
\end{cases}
\]
The change of variable $u=g^{-1}(\sqrt{h}\sin\theta)$ transforms the integral expression of $T_b$ into
\[
T_b(h)=\sqrt{2}\int_{-\arcsin{\sqrt{\frac{h^*}{h}}}}^{\frac{\pi}{2}}(g^{-1})'(\sqrt{h}\sin\theta)d\theta.
\] 
The same change of variables transforms $T_p$ into
\[
T_p(h)=\sqrt{2}\int_{-\frac{\pi}{2}}^{\frac{\pi}{2}}(g^{-1})'(\sqrt{h}\sin\theta)d\theta.
\]
At this point the continuity becomes clear since $\lim_{h\rightarrow (h^*)^+}\arcsin{\sqrt{\frac{h^*}{h}}}=\frac{\pi}{2}.$ We differentiate with respect to $h$ to obtain
\[
T_b'(h)=\frac{1}{\sqrt{2h}}\int_{-\arcsin{\sqrt{\frac{h^*}{h}}}}^{\frac{\pi}{2}}(g^{-1})''(\sqrt{h}\sin\theta)\sin\theta d\theta - \frac{\sqrt{h^*}}{2h\sqrt{h-h^*}}\lim_{z\rightarrow{-\sqrt{h^*}}}(g^{-1})'(z).
\]
Using the identity $g^2=V$, we notice that
\[
(g^{-1})'(z)=\frac{1}{g'(g^{-1}(z))}=\frac{2g}{V'}(g^{-1}(z)).
\]
The previous equality together with $\lim_{u\rightarrow\alpha^+}g(u)=-\sqrt{h^*}$ implies that
\[
\lim_{z\rightarrow-\sqrt{h^*}}(g^{-1})'(z)=\lim_{u\rightarrow\alpha^+}\frac{2g(u)}{V'(u)}=0
\]
since $V'(u)\rightarrow -\infty$ as $u\rightarrow \alpha^+$. Consequently, on account of the previous limit together with the notation $u=g^{-1}(\sqrt{h}\sin\theta)$, the derivative of $T_b$ writes
\begin{equation}\label{dTb}
T_b'(h)=\frac{1}{\sqrt{2h}}\int_{-\arcsin{\sqrt{\frac{h^*}{h}}}}^{\frac{\pi}{2}}\frac{-g''(u)}{g'(u)^3}\sin\theta d\theta.
\end{equation}
This shows the continuity of the derivative at $h=h^*$.
\end{proof}

\begin{lem}\label{lema:periodoinfinito}
If $u/V'(u)\rightarrow+\infty$ as $u\rightarrow+\infty$ and $V''(u)>0$ for all $u>\beta$ then the function $T(h)$ tends to infinity as $h$ tends to infinity.
\end{lem}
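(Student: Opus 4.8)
The plan is to bound $T_b(h)$ from below by an explicit quantity that diverges, exploiting the convexity of $V$ beyond $\beta$. Since $T(h)=T_b(h)$ for $h>h^*$, it suffices to show that $T_b(h)\to+\infty$ as $h\to+\infty$.

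First I would record two consequences of the hypotheses. Because $V'>0$ and $V''>0$ on $(\beta,+\infty)$, convexity gives $V(u)\geq V(\beta)+V'(\beta)(u-\beta)$ for $u>\beta$, and $V'(\beta)>0$ (as $uV'(u)>0$ for $u\neq 0$), so $V(u)\to+\infty$ as $u\to+\infty$. Hence $V$ maps $(\beta,+\infty)$ increasingly onto $(h^*,+\infty)$, the positive turning point $R(h):=u^+(h)$ is the unique solution of $V(u)=h$ with $u>\beta$ whenever $h>h^*$, and $R(h)\to+\infty$ as $h\to+\infty$.

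Next, for $h>h^*$ I would keep only the tail of the defining integral, where the integrand is positive:
\[
T_b(h)=\sqrt 2\int_{\alpha}^{R(h)}\frac{du}{\sqrt{h-V(u)}}\;\geq\; \sqrt 2\int_{\beta}^{R(h)}\frac{du}{\sqrt{V(R(h))-V(u)}}.
\]
On $[\beta,R(h)]$ the graph of the convex function $V$ lies above its tangent line at $R(h)$, so $V(R(h))-V(u)\leq V'(R(h))\,(R(h)-u)$. Substituting this bound and using $\int_\beta^{R}(R-u)^{-1/2}\,du=2\sqrt{R-\beta}$ yields
\[
T_b(h)\;\geq\; 2\sqrt{\frac{2\,(R(h)-\beta)}{V'(R(h))}}.
\]
Since $R(h)\to+\infty$ and, by hypothesis, $u/V'(u)\to+\infty$ as $u\to+\infty$, the right-hand side tends to $+\infty$, which finishes the proof.

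I do not expect a genuine obstacle: the argument is an elementary estimate once one has the right inequality in hand. The only points needing a little care are verifying that $V$ is actually unbounded on $(\beta,+\infty)$ — so that $R(h)$ is defined for all large $h$ and diverges — and choosing the tangent-line bound at the moving endpoint $R(h)$ rather than at $\beta$, which is precisely what makes the resulting integral both explicitly computable and large.
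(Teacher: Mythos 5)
Your proof is correct and follows essentially the same route as the paper: discard the integral over $(\alpha,\beta)$, and bound the tail integral over $[\beta,u^+(h)]$ from below by $2\sqrt{u^+(h)-\beta}/\sqrt{V'(u^+(h))}$ before invoking $u/V'(u)\to+\infty$. The only cosmetic difference is that you obtain this bound from the tangent-line inequality for the convex function $V$ at $u^+(h)$, whereas the paper combines the mean value theorem with the monotonicity of $V'$; your packaging is, if anything, slightly cleaner.
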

\begin{proof}
For $h>h^*$ the extended period function writes
\[
T(h)=\sqrt{2}\int_{\alpha}^{\beta}\frac{du}{\sqrt{h-V(u)}}+\sqrt{2}\int_{\beta}^{u^+(h)}\frac{du}{\sqrt{h-V(u)}}\geq \int_{\beta}^{u^+(h)}\frac{du}{\sqrt{h-V(u)}} ,
\]
where $(\alpha,\beta)$ is the projection over the $u$-axis of the period annulus and $u^+(h)$ is the positive solution of $h-V(u)=0$ with $V'(u^+(h))>0$. By the mean value theorem, there exists $\beta<c(h)<u^+(h)$ such that
\[
\frac{1}{\sqrt{h-V(u)}} = \frac{1}{\sqrt{V'(c(h))}\sqrt{u^+(h)-u}},
\]
and so
\[
 \int_{\beta}^{u^+(h)}\frac{du}{\sqrt{h-V(u)}} = \frac{2\sqrt{u^+(h)-\beta}}{\sqrt{V'(c(h))}}.
\]
From the hypothesis $V''(u)>0$ for all $u>\beta$, we have $V'(c(h))<V'(u^+(h))$ and the previous equality yields to
\[
\int_{\beta}^{u^+(h)}\frac{du}{\sqrt{h-V(u)}} \geq \frac{2\sqrt{u^+(h)-\beta}}{\sqrt{V'(u^+(h))}}
\]
Since $\lim_{h\rightarrow+\infty}u^+(h)=+\infty$ and $\lim_{u\rightarrow+\infty}u/V'(u)=+\infty$ the previous inequality proves the result.
\end{proof}

The following result is an extension of Schaaf's monotonicity criterium~\cite{Schaaf} for the period function of planar potential systems with a weak singularity. The proof follows similarly to the original and here we only include some comments for the sake of brevity.

\begin{thm}\label{thm:schaaf}
If $V\in C^4(I)$ satisfies that
\begin{enumerate}[$(a)$]
    \item $(5(V''')^2-3V''V^{(4)})(u)>0$ for all $u\in I$ where $V''(u)>0$, 
    \item $V'(u)V'''(u)<0$ if $V''(u)=0$,
\end{enumerate}
then $T'(h)>0$ for all $h>0.$
\end{thm}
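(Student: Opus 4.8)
\emph{Proof plan.} The plan is to reduce the sign of $T'(h)$ to the positivity of a single weighted integral, valid simultaneously in the classical range $0<h<h^*$ and in the bouncing range $h\geq h^*$, and then to invoke Schaaf's analysis for the part that lives on the period annulus. Keep the function $g\in C^2(I)$ with $g^2=V$, $g'>0$, from the proof of Lemma~\ref{lema:period_C1}; it maps $I$ diffeomorphically onto $(-\sqrt{h^*},+\infty)$, and a direct computation from $g^2=V$ shows that $\Psi:=(g^{-1})''$ can be written, with $\phi:=2VV''-(V')^2$, as
\[
\Psi(z)=-\frac{g''(u)}{g'(u)^3}=-\frac{2\phi(u)}{V'(u)^3},\qquad u=g^{-1}(z),
\]
the two right--hand sides agreeing on both sides of $u=0$ and extending continuously through $z=0$, where $\Psi(0)=-2V'''(0)/\bigl(3V''(0)^2\bigr)$. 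Differentiating the integral expressions of $T_p$ and $T_b$ (the latter being exactly \eqref{dTb}), one gets for every $h>0$
\[
T'(h)=\frac{1}{\sqrt{2h}}\int_{-\vartheta(h)}^{\pi/2}\Psi\bigl(\sqrt{h}\,\sin\theta\bigr)\sin\theta\,d\theta,\qquad \vartheta(h)=\begin{cases}\pi/2,&0<h\leq h^*,\\[1mm]\arcsin\sqrt{h^*/h},&h\geq h^*,\end{cases}
\]
the two expressions matching at $h=h^*$ in accordance with Lemma~\ref{lema:period_C1}. Hence it is enough to prove that this integral is strictly positive for all $h>0$.

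Splitting at $\theta=0$ and reflecting $\theta\mapsto-\theta$ on $(-\vartheta(h),0)$ yields
\[
T'(h)=\frac{1}{\sqrt{2h}}\left[\int_0^{\vartheta(h)}\Bigl(\Psi(\sqrt{h}\sin\theta)-\Psi(-\sqrt{h}\sin\theta)\Bigr)\sin\theta\,d\theta+\int_{\vartheta(h)}^{\pi/2}\Psi(\sqrt{h}\sin\theta)\,\sin\theta\,d\theta\right],
\]
the last integral being void when $h\leq h^*$. I claim both brackets are non-negative, the first one strictly. For the sign of $\Psi$, note first that $V'(\alpha^+)=-\infty$ together with the convexity of $(V'')^{-2/3}$ near $\alpha$ (hypothesis $(a)$) forces $V''(\alpha^+)=+\infty$; then hypothesis $(b)$, which makes the zeros of $V''$ simple and transversal, rules out any zero of $V''$ on $(\alpha,0]$ and leaves at most one zero $u_0$ on $(0,+\infty)$, with $V''>0$ on $(\alpha,u_0)$; on that interval $(V'')^{-2/3}$ is convex and tends to $0$ at $\alpha$, hence is non-decreasing there, i.e. $V'''\leq0$ on $(\alpha,u_0)$. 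Since $\phi(0)=0$ and $\phi'=2VV'''$, we obtain $\phi\leq0$ on $(0,u_0)$ and, trivially, $\phi=2VV''-(V')^2<0$ on $(u_0,+\infty)$; hence $\phi\leq0$ on $(0,+\infty)$ and $\phi\geq0$ on $(\alpha,0)$, which gives $\Psi\geq0$ on all of $(-\sqrt{h^*},+\infty)$. In particular the second bracket, whose integrand only involves $\Psi$ at positive arguments $s=\sqrt{h}\sin\theta\in(\sqrt{h^*},\sqrt{h})$, is non-negative. For the first bracket one needs the strict inequality $\Psi(s)>\Psi(-s)$ for $0<s<\sqrt{h^*}$; equivalently, writing $u_\pm$ for the positive and negative roots of $V(u)=s^2$, that $|\phi(u_+)|/|V'(u_+)|^3>|\phi(u_-)|/|V'(u_-)|^3$. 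This is Schaaf's monotonicity inequality for the period annulus, and it is where $(a)$ and $(b)$ are used in full strength; the argument follows~\cite{Schaaf}.

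Thus $T'(h)>0$ for all $h>0$, the value $h=h^*$ being already covered by the formulas above. The only genuinely difficult step, entirely inherited from Schaaf, will be the delicate comparison of the ``normalized curvature'' $|\phi|/|V'|^3$ at the two roots of $V(u)=s^2$; the new points are mere bookkeeping. The potential is now defined only on $(\alpha,+\infty)$ with $V'(\alpha^+)=-\infty$, which causes no trouble because all of Schaaf's estimates are carried out orbit by orbit and never reach the level $u=\alpha$, and near the outer endpoint $z=-\sqrt{h^*}$ one only uses $(g^{-1})'(-\sqrt{h^*})=0$, already established in Lemma~\ref{lema:period_C1}. Crucially, however, hypotheses $(a)$--$(b)$ are now required on the whole of $I$, not just on the projection $(\alpha,\beta)$ of the period annulus, because the large--energy bouncing orbits do explore the region $u>\beta$.
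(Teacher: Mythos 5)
Your argument is correct and follows essentially the same route as the paper: after passing to the angular variable, the derivative $T'(h)$ splits into a period-annulus part, whose positivity is delegated to Schaaf's pointwise comparison exactly as in the paper, and an outer part over $u>\beta$, whose sign is controlled by showing $2VV''-(V')^2\leq 0$ for $u>0$. The only difference is that you spell out this last sign argument (via the convexity of $(V'')^{-2/3}$ and $V'(\alpha^+)=-\infty$), whereas the paper simply cites the arguments of Schaaf's Lemma~1 for it; the decomposition and the key inputs are identical.
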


\begin{proof}
From Schaaf's monotonicity criterium~\cite{Schaaf} we already know that the hypothesis in the statement imply that $T_p(h)$ is monotone increasing on $(0,h^*)$. To show that $T_b(h)$ is monotone increasing on $(h^*,+\infty)$ we perform the change of variable $u=g^{-1}(\sqrt{h}\sin\theta)$ on the expression in~\eqref{dTb} and the derivative of $T_b$ writes
\begin{equation}\label{split}
\sqrt{2}hT_b'(h)=\int_{\alpha}^{\beta}\frac{\phi(u)V'(u)du}{\sqrt{h-V(u)}}+\int_{\beta}^{u^+(h)}\frac{\phi(u)V'(u)du}{\sqrt{h-V(u)}},
\end{equation}
where $\phi(u)=\frac{(V')^2-2VV''}{(V')^3}(u)$ and $(\alpha,\beta)$ is the projection of the period annulus over the $u$-axis. The original proof of Schaaf shows that the first integral is positive but the same arguments employed in~\cite[Lemma 1]{Schaaf} prove that $\phi(u)>0$ for $u>0$ and so the second integral is also positive.
\end{proof}

\begin{lem}\label{lema:monotona2}
If $(1-2VV''/(V')^2)(u)\geq \ell>0$ for all $u$ large enough, $u/V'(u)\rightarrow +\infty$ as $u\rightarrow+\infty$ and $V''(u)>0$ for all $u>\beta$ then there exists $\overline{h}>h^*$ such that $T(h)$ is monotone increasing on $(\overline{h},+\infty)$. 
\end{lem}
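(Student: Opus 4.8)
The plan is to work from the representation~\eqref{split}. Adding its two integrals, for $h>h^*$ one has
\[
\sqrt 2\,h\,T_b'(h)=\int_{\alpha}^{u^+(h)}\frac{\phi(u)\,V'(u)}{\sqrt{h-V(u)}}\,du ,
\]
and since $\phi(u)V'(u)=1-2V(u)V''(u)/V'(u)^2$, the first hypothesis provides $M>\beta$ with $\phi(u)V'(u)\geq\ell>0$ for every $u\geq M$. As $V'>0$ is non-decreasing on $(\beta,+\infty)$, we have $V(u)\geq V(\beta)+V'(\beta)(u-\beta)\to+\infty$, so $u^+(h)\to+\infty$ and $u^+(h)>M$ for $h$ large; for such $h$ I would split the integral at $M$ and estimate the two pieces separately. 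By Lemma~\ref{lema:period_C1}, $T\in C^1(0,+\infty)$ and $T\equiv T_b$ on $(h^*,+\infty)$, so it is enough to show that the sum of the two pieces is positive for all $h$ larger than some $\overline{h}>h^*$.

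For the piece over $[M,u^+(h)]$ I would follow the idea of Lemma~\ref{lema:periodoinfinito}. There $\phi V'\geq\ell$, and since $V''>0$ on $(\beta,+\infty)$ the function $V$ is convex on $[M,u^+(h)]$, so $h-V(u)=V(u^+(h))-V(u)\leq V'(u^+(h))(u^+(h)-u)$; integrating the resulting pointwise lower bound,
\[
\int_{M}^{u^+(h)}\frac{\phi(u)V'(u)}{\sqrt{h-V(u)}}\,du\ \geq\ \frac{\ell}{\sqrt{V'(u^+(h))}}\int_{M}^{u^+(h)}\frac{du}{\sqrt{u^+(h)-u}}\ =\ \frac{2\ell\,\sqrt{u^+(h)-M}}{\sqrt{V'(u^+(h))}} .
\]
Since $V'(u^+(h))\geq V'(\beta)>0$ while $u^+(h)/V'(u^+(h))\to+\infty$, the right-hand side tends to $+\infty$; hence this piece tends to $+\infty$ as $h\to+\infty$.

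For the piece over $(\alpha,M]$ the plan is to show it stays bounded, in fact tends to $0$. On $(\alpha,M]$ one has $V\leq V_M:=\max\{h^*,V(M)\}$ (recall $0\leq V\leq h^*$ on $(\alpha,\beta)$ and $V$ increases on $[\beta,M]$), so $(h-V(u))^{-1/2}\leq(h-V_M)^{-1/2}$ and
\[
\left|\int_{\alpha}^{M}\frac{\phi(u)V'(u)}{\sqrt{h-V(u)}}\,du\right|\ \leq\ \frac{1}{\sqrt{h-V_M}}\int_{\alpha}^{M}|\phi(u)V'(u)|\,du .
\]
The last integral is finite: on $[\beta,M]$ the integrand is continuous because $V'>0$, and near the singular endpoint $\alpha$ the integrability of $\phi V'=1-2VV''/(V')^2$ follows from $0\leq V\leq h^*$ together with the integrability of $V''/(V')^2$ there — a primitive of $V''/(V')^2$ being $-1/V'$, which tends to $0$ as $u\to\alpha^+$. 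Thus this piece is $O(h^{-1/2})$. Combining the two estimates, $\sqrt 2\,h\,T_b'(h)\to+\infty$; in particular $T_b'(h)>0$ for all $h$ beyond some $\overline{h}>h^*$, and since $T$ is $C^1$ and coincides with $T_b$ past $h^*$, this is exactly the asserted monotonicity.

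The only genuinely delicate point I anticipate is the integrability of $\phi V'$ near $\alpha$: the factor $V''/(V')^2$ is in general unbounded there (in the model cases $V''\to+\infty$ as $u\to\alpha^+$), so this is precisely where the standing assumptions on the behavior of $V$ at the weak singularity — the same ones already exploited in the proof of Lemma~\ref{lema:period_C1} — have to be used. Everything else is a routine adaptation of Lemma~\ref{lema:periodoinfinito}.
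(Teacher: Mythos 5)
Your proof is correct and follows essentially the same route as the paper: both split $\sqrt{2}\,h\,T_b'(h)$ from~\eqref{split} at a point beyond which $\psi=1-2VV''/(V')^2\geq\ell$, show the far piece diverges by the convexity estimate of Lemma~\ref{lema:periodoinfinito}, and show the near piece stays bounded because the denominator grows with $h$. Your treatment of the integrability of $\psi$ near the singular endpoint (via the primitive $-1/V'$) is in fact more explicit than the paper's, which simply asserts that the first integral at $h=V(\overline{u})$ is bounded as a bouncing half-period with bounded numerator; just note that your bound with $|\phi V'|$ tacitly uses a fixed sign of $V''$ near $\alpha$ to pass from convergence of $\int V''/(V')^2$ to absolute convergence.
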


\begin{proof}
Denoting by $\overline{u}$ the point such that $(1-2VV''/(V')^2)(u)\geq \ell$ if $u\geq \overline{u}$, we split the expression~\eqref{split} in two parts:
\[
\sqrt{2}hT_b'(h)=\int_{\alpha}^{\overline{u}}\frac{\psi(u)du}{\sqrt{h-V(u)}}+\int_{\overline{u}}^{u^+(h)}\frac{\psi(u)du}{\sqrt{h-V(u)}},
\]
where $\psi(u)=(1-2VV''/(V')^2)(u)$. For the singular value $h=V(\overline{u})$ the first integral corresponds to the period of the bouncing solution with initial conditions $u(0)=\overline{u}$, $\dot u(0)=0$. That is, the integral is bounded for $h=V(\overline{u})$. Clearly this bound is uniform if $h$ is increased since the numerator is bounded on the fixed interval of integration and the denominator increases as the energy does. We claim that the second integral tends to infinity as $h$ tends to infinity. Indeed, we have
\[
\int_{\overline{u}}^{u^+(h)}\frac{\psi(u)du}{\sqrt{h-V(u)}}>\ell\int_{\overline{u}}^{u^+(h)}\frac{du}{\sqrt{h-V(u)}}.
\]
The integral at the right-hand side of the inequality tends to infinity as $h$ tends to infinity, as we have already shown in Lemma~\ref{lema:periodoinfinito}. Consequently, $\lim_{h\rightarrow +\infty}\sqrt{2}hT_b'(h)=+\infty$ and the result follows.
\end{proof}

\subsection{The power-like integrable system}\label{sec:integrable}

In this section we recover system~\eqref{eq} and analyze it when $p(t)\equiv p_0$ is a negative constant. That is,
\begin{equation}\label{eq_int}
\ddot u -\frac{1}{u^\alpha} = p_0,\ u>0.
\end{equation}
Equation~\eqref{eq_int} has an associated potential energy function given by
\[
V(u)\!:= - p_0 u - \frac{u^{1-\alpha}}{1-\alpha},\ u\geq 0.
\]
A direct study of the potential shows that the associated first order differential system of equation~\eqref{eq_int} exhibits a center at $((-p_0)^{-1/\alpha},0)$ with bounded period annulus, which projection over the $u$-axis is $(0,((\alpha-1)p_0)^{-1/\alpha})$. The total energy function is denoted by $H(u,\dot u)\!:=\frac{1}{2}\dot u^2+V(u).$ In particular, $h=0$ is the energy at the collision point with zero velocity (i.e. the energy at the outer boundary of the period annulus).

We denote by $u(t;u_0)$ the solution of~\eqref{eq_int} with initial conditions $u(0;u_0)=u_0>0$ and $\dot u(0;u_0)=0.$ Solutions with initial conditions $u_0\in(0,((\alpha-1)p_0)^{-1/\alpha})$ (that is, with negative energy) are globally defined and periodic. On the other hand, solutions with $u_0\geq ((\alpha-1)p_0)^{-1/\alpha}$ (positive energy) are no longer globally defined since they reach the singularity $u=0$ in finite time. From the expression of the energy function these orbits reach the singularity with finite velocity and the energy is conserved so the singularity can be interpreted as an elastic collision. More precisely, there exists $t_0>0$ such that $u(t_0;u_0)=u(-t_0;u_0)=0$ and $\dot u(t_0;u_0)=-\dot u(-t_0;u_0)$. 

\begin{lem}\label{lema:funcion_periodo}
Consider the extended period function $T(h)$ associated to equation~\eqref{eq_int} and any $p_0<0.$
\begin{enumerate}[$(a)$]
    \item If $\alpha>1/2$ then $T(h)$ is monotone increasing and tends to infinity.
    \item If $\alpha=1/2$ then $T(h)$ is constant for $h<0$, and monotone increasing and tends to infinity for $h>0$.
    \item If $0<\alpha<1/2$ then $T(h)$ is monotone decreasing for $h<0$ and there exists $\overline{h}>0$ such that $T(h)$ is monotone increasing in $(\overline{h},+\infty)$ and tends to infinity.
\end{enumerate}
\end{lem}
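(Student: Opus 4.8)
The plan is to regard equation~\eqref{eq_int} as a member of the integrable family of Section~2 and to feed its explicit potential into the results proved above. Writing $V(u)=-p_0u-\frac{u^{1-\alpha}}{1-\alpha}$, one has $V'(u)=-p_0-u^{-\alpha}$ and $V''(u)=\alpha\,u^{-\alpha-1}>0$ for all $u>0$, and a short computation gives
\[
\bigl(5(V''')^2-3V''V^{(4)}\bigr)(u)=\alpha^2(\alpha+1)(2\alpha-1)\,u^{-2\alpha-4}.
\]
Two facts will be used repeatedly. First, $V''$ is strictly positive on $I$, so hypothesis~$(b)$ of Theorem~\ref{thm:schaaf} (and of its monotone-decreasing analogue) is vacuous, while the sign of $5(V''')^2-3V''V^{(4)}$ equals $\sgn(2\alpha-1)$. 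Second, $V'(u)\to-p_0>0$ as $u\to+\infty$, hence $u/V'(u)\to+\infty$, which together with $V''>0$ on $(\beta,+\infty)$ makes Lemma~\ref{lema:periodoinfinito} applicable in all three cases, so that $T(h)\to+\infty$ as $h\to+\infty$. (One may also observe that the scaling $u\mapsto(-p_0)^{-1/\alpha}u$ together with a linear rescaling of time reduces the discussion to $p_0=-1$, whence $T$ depends, up to a positive factor, only on $\alpha$.)

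I would prove~$(a)$ first. For $\alpha>1/2$ the quantity $5(V''')^2-3V''V^{(4)}$ is positive on all of $I$, so Theorem~\ref{thm:schaaf} applies --- after the harmless translation making the center $((-p_0)^{-1/\alpha},0)$ the origin and $V$ vanish there, which alters none of $V'',V''',V^{(4)}$ --- and gives $T'(h)>0$ throughout the domain of $T$; with the growth above this is~$(a)$. Next I dispose of the period annulus ($h<0$) in the other cases. For $0<\alpha<1/2$ the same quantity is \emph{negative} on $I$ and no collision occurs in this energy range, so the monotone-decreasing version of Schaaf's criterion~\cite{Schaaf} forces $T_p$ to be strictly decreasing on $h<0$. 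For $\alpha=1/2$ it vanishes identically and $T_p$ is constant; this is the isochronicity recorded in~\cite{Rojas}, and it follows directly from the change of variable $u=s^2$, under which $h-V(u)$ becomes a downward parabola in $s$ and $T_p(h)=2\sqrt2\,\pi\,(-p_0)^{-3/2}$.

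It remains to handle the bouncing branch $h>0$ in~$(b)$ and~$(c)$. For~$(c)$ I use Lemma~\ref{lema:monotona2}: since $V(u)\sim-p_0u$, $V'(u)\to-p_0$ and $V''(u)\to0$ as $u\to+\infty$, the quantity $1-2VV''/(V')^2$ tends to $1$ and hence exceeds any $\ell\in(0,1)$ for $u$ large; the remaining hypotheses were checked above, so there is $\overline h>0$ with $T$ strictly increasing on $(\overline h,+\infty)$, which with the growth gives~$(c)$. For~$(b)$ the substitution $u=s^2$ again linearises the integrand and evaluates the branch in closed form,
\[
T_b(h)=\frac{2\sqrt2}{\sqrt{-p_0}}\left(\sqrt{\frac{h}{-p_0}}+\frac{1}{-p_0}\Bigl(\frac{\pi}{2}+\arcsin\frac{1}{\sqrt{1-p_0h}}\Bigr)\right),\qquad h>0,
\]
so that $T_b'(h)=\sqrt{2h}/(1-p_0h)>0$; since $T_b(0^+)=2\sqrt2\,\pi\,(-p_0)^{-3/2}$ matches the annulus value (in agreement with Lemma~\ref{lema:period_C1}), this establishes~$(b)$.

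The one genuinely delicate step is the bouncing branch of~$(b)$. When $\alpha>1/2$ the representation~\eqref{split} of $\sqrt2\,hT_b'(h)$ is positive for every $h>0$ exactly as in the proof of Theorem~\ref{thm:schaaf}; when $\alpha<1/2$ it is positive for $h$ large because, by Lemma~\ref{lema:monotona2}, the contribution from large $u$ dominates; but when $\alpha=1/2$ the weight $1-2VV''/(V')^2$ appearing in~\eqref{split} changes sign on the range of integration, so neither argument applies and the conclusion must rely on the closed form --- which exists only at this borderline value of $\alpha$. The globally (not merely locally) decreasing behaviour asserted in~$(c)$ for $h<0$ is the other point where, absent a ready-made decreasing criterion, one would instead have to estimate the time integral by hand.
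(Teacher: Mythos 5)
Your proposal is correct and follows essentially the same route as the paper: part $(a)$ via the extended Schaaf criterion of Theorem~\ref{thm:schaaf} together with Lemma~\ref{lema:periodoinfinito}, part $(c)$ via the classical decreasing Schaaf criterion on the annulus and Lemma~\ref{lema:monotona2} for large $h$, and part $(b)$ by the explicit substitution $u=s^2$ (your closed form for $T_b$ agrees with the paper's, since $\arcsin(1-p_0h)^{-1/2}=\arctan((-p_0h)^{-1/2})$). The only additions are cosmetic: you make explicit the translation normalizing the center and the formula $T_b'(h)=\sqrt{2h}/(1-p_0h)$, which the paper leaves as "easily checked."
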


\begin{proof}
The result in $(a)$ follows applying Theorem~\ref{thm:schaaf}. Indeed, elementary computations lead to $V''(u)=\alpha u^{-(\alpha+1)}>0$ for all $u>0$ and 
\begin{equation}\label{Schaaf_exp}
(5(V''')^2-3V''V^{(4)})(u)=\alpha^2(\alpha+1)(2\alpha-1)u^{-2(\alpha+2)}.
\end{equation}
Therefore assumptions in Theorem~\ref{thm:schaaf} are fulfilled when $\alpha>1/2$ so the extended period function is monotone increasing. Moreover, 
\[
\lim_{u\rightarrow +\infty}\frac{u}{V'(u)}=\lim_{u\rightarrow+\infty}\frac{u}{-p_0-u^{-\alpha}}=+\infty
\]
since $\alpha>0$ and $p_0<0.$ Then Lemma~\ref{lema:periodoinfinito} implies that $T(h)$ tends to infinity.

To show $(c)$ we first employ the classical Schaaf's criterion in~\cite{Schaaf} for monotone decreasing period function. In this case the condition to be satisfied is expression in~\eqref{Schaaf_exp} to be negative. Clearly this is so for $0<\alpha<1/2$ and then $T(h)$ is monotone decreasing for $h<0$. For $h>0$ we notice that
\[
\lim_{u\rightarrow+\infty} \left(1-\frac{2VV''}{(V')^2}\right)(u)=\lim_{u\rightarrow+\infty} 1+\frac{2\alpha\left(p_0u+\frac{u^{1-\alpha}}{1-\alpha}\right)}{u^{1+\alpha}(p_0+u^{-\alpha})^{2}}=1,
\]
so assumptions in Lemma~\ref{lema:monotona2} hold and the result in $(c)$ follows. 

Finally, we prove $(b)$ by direct computation. Indeed, for $\alpha=1/2$ the integral of the expression of the period function writes
\[
\sqrt{2}\int\frac{du}{\sqrt{h-V(u)}}=\sqrt{2}\int\frac{du}{\sqrt{h+p_0u+2\sqrt{u}}}.
\]
The change of variables $u=z^2$ yields to
\[
\sqrt{2}\int\frac{du}{\sqrt{h-V(u)}}=2\sqrt{2}\int\frac{zdz}{\sqrt{h-\frac{1}{p_0}+p_0(z+1/p_0)^2}},
\]
which can be explicitly integrated, giving
\[
\frac{2\sqrt{2}}{(-p_0)^{\frac{3}{2}}}\arctan\left(\frac{\sqrt{-p_0}(z+\frac{1}{p_0})}{\sqrt{h-\frac{1}{p_0}+p_0(z+\frac{1}{p_0})}}\right)+\frac{2\sqrt{2}\sqrt{h-\frac{1}{p_0}+p_0(z+\frac{1}{p_0})}}{p_0}.
\]
It is then a computation to show that, if $h<0$, the evaluation of the previous function on both endpoints of the interval of integration give
\[
T_p(h)\equiv \frac{2\sqrt{2}\pi}{(-p_0)^{\frac{3}{2}}}.
\]
On the other hand, if $h>0$ the left-hand endpoint is $z=0$ and so
\[
T_b(h)=\frac{2\sqrt{2}}{(-p_0)^{\frac{3}{2}}}\left(\frac{\pi}{2}+\arctan((-p_0h)^{-\frac{1}{2}})\right)-\frac{2\sqrt{2h}}{p_0}.
\]
In particular, $\lim_{h\rightarrow 0^+}T_b(h)=2\sqrt{2}\pi(-p_0)^{-\frac{3}{2}}$. The properties on the statement are easily checked using the expression of $T_b$.
\end{proof}

\section{Regularization of collisions and bouncing solutions}

We now return to system~\eqref{eq}. The periodicity of $p(t)$ is not needed at this moment so we assume that $p:\R\rightarrow \R$ is a continuous and bounded negative function satisfying 
\begin{equation}\label{p_bounded}
    p_2\leq p(t)\leq p_1<0 
\end{equation}
for all $t\in\R.$

Let us consider the first order differential system associated to~\eqref{eq}. That is,
\[
X: \dot u = v, \ \dot v = \frac{1}{u^{\alpha}}+p(t).
\]
We also denote by $X_1$ and $X_2$ the first order differential systems associated to \eqref{eq} taking $p(t)\equiv p_1$ and $p(t)\equiv p_2$, respectively. Notice that $X_1$ and $X_2$ are both integrable first order differential systems associated to an equation of the form~\eqref{eq_int}. We denote by $H_1$ and $H_2$ the energy functions associated to $X_1$ and $X_2$. We also define $\eta=((\alpha-1)p_1)^{-1/\alpha}$ for convenience. 

\begin{lem}\label{lema:choque}
Let us assume that $p(t)$ is a continuous and bounded negative function satisfying~\eqref{p_bounded}. Then every classical solution $u$ of equation~\eqref{eq} with initial conditions $u(0)>\eta$ and $\dot u(0)=0$ has a finite maximal interval of definition $(t_0,t_1)$ such that $u(t_0)=u(t_1)=0$. Moreover, $\dot u(t_0)>0$ and $\dot u(t_1)<0$ are finite.
\end{lem}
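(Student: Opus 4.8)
The plan is to compare the non-autonomous solution with the integrable field $X_1$, exploiting that $p(t)\le p_1$ forces the energy $H_1$ to be monotone along the solution while $\dot u\le 0$. Write $V_1(u)=-p_1u-u^{1-\alpha}/(1-\alpha)$ for the potential of $X_1$; recall that $V_1$ is strictly decreasing on $(0,u_c)$ and strictly increasing on $(u_c,+\infty)$, where $u_c=(-p_1)^{-1/\alpha}$ is the centre, that $V_1(0)=V_1(\eta)=0$, and that $u_c<\eta$ (indeed $(0,\eta)$ is the projection of the period annulus of $X_1$). The hypothesis $u(0)>\eta$ then yields two facts at once. First, $H_1(u(0),0)=V_1(u(0))>0$, so the initial point lies strictly outside the period annulus. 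Second, since $u(0)>\eta>u_c$ we have $u(0)^{-\alpha}<u_c^{-\alpha}=-p_1$, hence $\ddot u(0)=u(0)^{-\alpha}+p(0)\le u(0)^{-\alpha}+p_1<0$, so $\dot u(t)<0$ for $t>0$ small.

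Let $(t_0,t_1)$ be the maximal interval of the classical solution and set $E(t)=\tfrac12\dot u(t)^2+V_1(u(t))$, so $E(0)=V_1(u(0))>0$ and, using the equation, $\dot E=(p(t)-p_1)\dot u\ge 0$ whenever $\dot u\le 0$. The first — and main — step is to prove that $\dot u<0$ on all of $(0,t_1)$. If not, let $t^*\in(0,t_1)$ be the first subsequent zero of $\dot u$; then $E$ is non-decreasing on $[0,t^*]$, so $E(t^*)\ge V_1(u(0))$, while $u$ is strictly decreasing there, so $u(t^*)\in(0,u(0))$ and hence $E(t^*)=V_1(u(t^*))<V_1(u(0))$, because the shape of $V_1$ recalled above gives $V_1(u)<V_1(u(0))$ for all $u\in(0,u(0))$; a contradiction. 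Consequently $u$ is strictly decreasing and $E$ non-decreasing on $(0,t_1)$, and writing $\dot E=(p_1-p(t))(-\dot u)\le(p_1-p_2)(-\dot u)$ and integrating (using $\int_0^t(-\dot u)\,ds=u(0)-u(t)\le u(0)$) we obtain the two-sided bound $V_1(u(0))\le E(t)\le V_1(u(0))+(p_1-p_2)\,u(0)$ on $(0,t_1)$.

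From $\tfrac12\dot u^2=E(t)-V_1(u(t))$, these bounds and the fact that $V_1$ is bounded below, the solution and its derivative remain bounded on $(0,t_1)$; hence $u(t)\to L\ge 0$ and $\dot u(t_1^-)$ exists. If $L>0$ then $\dot u(t_1^-)^2=2(\lim E-V_1(L))\ge 2(V_1(u(0))-V_1(L))>0$ (as $L<u(0)$), which forces $t_1=+\infty$ — otherwise the classical solution would extend past $t_1$ — but then the monotone $u$ with $\int_0^{\infty}(-\dot u)=u(0)-L<\infty$ would satisfy $\liminf(-\dot u)=0$, contradicting the previous inequality. Therefore $L=0$, and since $V_1(u(t))\to 0$ we get $\dot u(t)^2\to 2\lim E\ge 2V_1(u(0))>0$, so that near $t_1$ one has $-\dot u\ge\sqrt{V_1(u(0))}$; this shows $u$ reaches $0$ in finite time, i.e. $t_1<\infty$, $u(t_1)=0$ and $\dot u(t_1)=-\big(2\lim_{t\to t_1^-}E(t)\big)^{1/2}<0$ is finite. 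For the left endpoint I would run the identical argument on $w(s):=u(-s)$, which solves $\ddot w=w^{-\alpha}+p(-s)$ with $p(-\cdot)$ still satisfying \eqref{p_bounded} and with $w(0)=u(0)>\eta$, $\dot w(0)=0$, obtaining $t_0>-\infty$, $u(t_0)=0$ and $\dot u(t_0)>0$ finite. The obstacle to watch is precisely the monotonicity claim $\dot u<0$ on $(0,t_1)$: once it holds, absence of blow-up, finite-time collision and finiteness of the impact velocity all follow from the resulting control of $H_1$ along the orbit.
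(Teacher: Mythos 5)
Your proof is correct and rests on the same mechanism as the paper's: the sign of $\langle X,\nabla H_1\rangle=\dot u\,(p(t)-p_1)$, i.e.\ the monotonicity of the frozen energy $E=H_1(u,\dot u)$ while $\dot u\leq 0$, together with the upper bound that in the paper appears as confinement between the level curves $H_1=H_1(u_0,0)$ and $H_2=H_2(u_0,0)$. The only substantive difference is that the paper's proof also extracts the time-ordering estimates~\eqref{tiempos} via the comparison solutions $u_1,u_2$ (used later in Theorem~\ref{th3}), which your argument does not need for the statement itself, while you make the step ``$\dot u$ cannot vanish again, hence the orbit reaches $u=0$ in finite time'' more explicit than the paper does.
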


\begin{proof}
Let $u(t)$ be a maximal solution of equation~\eqref{eq} with initial conditions $u(0)=u_0>0$ and $\dot u(0)=0$ and let $u_1(t)$ and $u_2(t)$ be the maximal solutions of equation~\eqref{eq} with same initial conditions as $u(t)$ taking $p(t)\equiv p_1$ and $p(t)\equiv p_2$, respectively. Since $p_1$ and $p_2$ are negative and the initial conditions satisfy $u_i(0)>\eta$ and $\dot u_i(0)=0$, $i=1,2$, by the discussion of the previous section, both $u_1$ and $u_2$ are solutions defined in a bounded interval, $(t_{01},t_{11})$ and $(t_{02},t_{12})$ respectively, and they reach the singularity. 
The function $u_1$ is contained in the level curve $H_1(u,v)=\frac{1}{2}v^2-p_1u-\frac{u^{1-\alpha}}{1-\alpha}=H_1(u_0,0)$ whereas $u_2$ is contained in the level curve $H_2(u,v)=\frac{1}{2}v^2-p_2u-\frac{u^{1-\alpha}}{1-\alpha}=H_2(u_0,0)$. Notice that $H_2(u_0,0)-H_1(u_0,0)=(p_1-p_2)u_0>0.$

First we show that $t_{12}<t_{11}$. To do so let us argue by contradiction assuming that $t_{11}\leq t_{12}$. Since $u_1(0)=u_2(0)>\eta$ and $\dot u_1(0)=\dot u_2(0)=0$ the difference function $\omega(t)=u_1(t)-u_2(t)$ satisfies $\omega(0)=0$, $\dot \omega(0)=0$ and $\ddot \omega(t)=\frac{1}{u_1(t)^{\alpha}}-\frac{1}{u_2(t)^{\alpha}}+p_1-p_2.$ In particular, $\ddot \omega(0)=p_1-p_2>0$ and so $u_1(t)>u_2(t)$ for small $t$ positive. Therefore, if $t_{11}\leq t_{12}$, there exists some $0<t^*\leq t_{11}$ such that $u_1(t^*)=u_2(t^*)$ with $|\dot u_1(t^*)|>|\dot u_2(t^*)|$. Using the energy level of each solution, we have that $H_2(u_2(t^*),\dot u_2(t^*))-H_1(u_1(t^*),\dot u_1(t^*))=(p_1-p_2)u_0.$ That is,
\[
\frac{1}{2}(\dot u_2(t^*)^2-\dot u_1(t^*)^2)=(p_1-p_2)(u_0-u_1(t^*))>0.
\]
Then $\dot u_2(t^*)^2>\dot u_1(t^*)^2$ reaching contradiction. Similarly one can show that $t_{01}<t_{02}$. Therefore $(t_{02},t_{12})\subset (t_{01},t_{11})$.

Let us show now that the maximal interval of definition of $u(t)$ is $(t_0,t_1)$ finite satisfying 
\begin{equation}\label{tiempos}
t_{01}\leq t_0\leq t_{02}<0<t_{12}\leq t_1\leq t_{11}
\end{equation}
and that $u_2(t)\leq u(t)\leq u_1(t)$ for all $t$ in the common interval of definition. Indeed, $\left<X,\nabla H_1\right>=v(p(t)-p_1)$ which, since $p(t)\leq p_1$, has opposite sign than $v$. Similarly, $\left<X,\nabla H_2\right>=v(p(t)-p_2)$ has the same sign than $v$. In particular, the trajectory $(u(t),\dot u(t))$ is confined in the region delimited by $H_1(u,v)=H_1(u_0,0)$ and $H_2(u,v)=H_2(u_0,0)$ for all time $t$ in the interval of definition of $u(t)$. We point out that the outer boundary of the region is given by $H_2(u_0,0)$, whereas the inner boundary is given by $H_1(u_0,0)$. Moreover, at $(u_0,0)$ the vector field $X$ is vertical and points down. This implies that the function $u(t)$ for $t\geq 0$ is decreasing and $\dot u(t)$ cannot tend to zero. Thus $u(t)$ reaches the singularity $u=0$ in finite time and $-\infty<\dot u_1(t_{11})\leq \dot u(t_1)\leq \dot u_2(t_{12})<0$. Finally assume that $t_1>t_{11}$. Then there exists $t^*$ in such a way $u(t^*)=u_1(t^*)$ and $\dot u(t^*)>\dot u_1(t^*)$. This contradicts the fact that $u(t)$ is inside the region mentioned before. Thus $t_1\leq t_{11}$. Respective arguments with $u_2$ shows $t_1\geq t_{12}$. The result holds backwards in time similarly.
\end{proof}

\begin{lem}\label{lema:regularizacion}
Let $t_0$ and $v_0$ be two numbers with $v_0>\sqrt{2(p_1-p_2)\eta}$. Assume that $p(t)$ is Lipschitz-continuous. Then there exists a unique maximal solution of~\eqref{eq} defined in $(t_0,t_1)$ with $t_0<t_1<+\infty$ satisfying 
\[
\lim_{t\rightarrow t_0^+}u(t)=\lim_{t\rightarrow t_1^-}u(t)=0
\]
and
\[
\lim_{t\rightarrow t_0^+}\dot u(t)=v_0 \ \text{ and }\ \lim_{t\rightarrow t_1^-}\dot u(t)= v_1,
\]
for some real number $v_1<0.$
\end{lem}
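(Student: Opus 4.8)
The plan is to desingularise the problem near the collision instant $t_0$, then continue the solution forward and close the argument with Lemma~\ref{lema:choque}.

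First I would establish local existence and uniqueness at $t_0$. Since $v_0>0$, any solution with the prescribed limits satisfies $\dot u>0$ on a right-neighbourhood of $t_0$, so $u$ is strictly increasing there and may be used as the independent variable. With the ``$p$-free energy'' $e=\tfrac12\dot u^2-\tfrac{u^{1-\alpha}}{1-\alpha}$, which satisfies $\dot e=p(t)\dot u$ along solutions of~\eqref{eq}, the pair $(t,e)$ solves, as a function of $u$, the Cauchy problem
\[
\frac{dt}{du}=\Bigl(2e+\tfrac{2u^{1-\alpha}}{1-\alpha}\Bigr)^{-1/2},\qquad \frac{de}{du}=p(t),\qquad t(0)=t_0,\quad e(0)=\tfrac12 v_0^2 .
\]
Its right-hand side is continuous in $u$ (this is where $0<\alpha<1$ enters), smooth in $e$ near $e=\tfrac12 v_0^2>0$, and Lipschitz in $t$ because $p$ is Lipschitz; hence the Cauchy--Lipschitz theorem yields a unique $C^1$ solution $(t(u),e(u))$ on some interval $[0,\delta]$. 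Since $t'(0)=1/v_0>0$, the map $u\mapsto t(u)$ inverts to a $C^1$ function $U(t)$ which solves~\eqref{eq} on a right-neighbourhood of $t_0$ with $U(t_0^+)=0$ and $\dot U(t_0^+)=v_0$, and this $U$ is the only solution with those limits because the construction is forced on any such solution.

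Next I would continue $U$ forward. For $u>0$ equation~\eqref{eq} is regular, so the solution extends; let $(t_0,t_{\max})$ be the maximal interval on which $\dot u>0$. There $u$ is increasing and, since $\langle X,\nabla H_1\rangle=\dot u\,(p(t)-p_1)\le 0$, the function $H_1(u,\dot u)$ is non-increasing and bounded by $H_1(0^+,v_0)=\tfrac12 v_0^2$; because $H_1(u,0)\to+\infty$ as $u\to+\infty$, this keeps both $u$ and $\dot u$ bounded on $(t_0,t_{\max})$, so $u(t)\to L<\infty$ as $t\to t_{\max}^-$. Since $\langle X,\nabla H_2\rangle\ge 0$ there, $H_2$ is non-decreasing, whence $H_2(L,0)\ge\tfrac12 v_0^2$; using the identity $H_2(\eta,0)=(p_1-p_2)\eta$ (a consequence of $H_1(\eta,0)=0$), the hypothesis $v_0>\sqrt{2(p_1-p_2)\eta}$, and the shape of the potential of $X_2$, this forces $L>\eta$, and therefore $L^{-\alpha}<\eta^{-\alpha}=(1-\alpha)(-p_1)<-p_1$. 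If $t_{\max}=+\infty$, then $\dot u(t)\to 0$ while for $t$ large $\ddot u=u^{-\alpha}+p(t)$ is bounded above by a negative constant, which forces $\dot u\to-\infty$: a contradiction. Hence $t_{\max}<+\infty$, with $u(t_{\max})=L>\eta$ and $\dot u(t_{\max})=0$.

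Finally I would invoke Lemma~\ref{lema:choque} with initial time $t_{\max}$: since $u(t_{\max})>\eta$ and $\dot u(t_{\max})=0$, the solution has a finite maximal interval, and its right endpoint $t_1$ satisfies $u(t_1^-)=0$ and $v_1:=\dot u(t_1^-)<0$. By the uniqueness in the first step and uniqueness for the regular equation on $\{u>0\}$, this continuation of $U$ is the unique maximal solution of~\eqref{eq} with the prescribed collision data, and its interval of definition is exactly $(t_0,t_1)$ with $t_0<t_1<+\infty$, because it hits the singularity at both ends and cannot be prolonged across either. The main obstacle is the first step: recasting an initial value problem whose vector field is singular precisely at the starting point as a regular Cauchy problem; once the reparametrisation by $u$ together with the energy $e$ is in place, the continuation is a chain of elementary energy estimates and a direct appeal to Lemma~\ref{lema:choque}.
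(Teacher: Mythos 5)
Your proof is correct and follows the same overall strategy as the paper: desingularize the collision by adjoining the energy as a second unknown, obtain local existence and uniqueness from the collision data, and then hand the continuation over to Lemma~\ref{lema:choque} once the solution is shown to reach a turning point above $\eta$. The one genuine difference is in how the regularized Cauchy problem is set up. The paper keeps $t$ as the independent variable and works with the position--energy system $\dot u=\sqrt{2(w-V(u))}$, $\dot w=p(t)\sqrt{2(w-V(u))}$, invoking ``classical theory'' for uniqueness; strictly speaking the right-hand side there is only H\"older (exponent $1-\alpha$) in $u$ at $u=0$, so Picard--Lindel\"of does not apply verbatim. Your reparametrization by $u$, solving for $(t(u),e(u))$, makes the right-hand side continuous in the independent variable and genuinely Lipschitz in the unknowns near $e=\tfrac12 v_0^2>0$, so the uniqueness step is airtight; the two formulations are equivalent where $\dot u>0$. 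Your continuation argument (monotonicity of $H_1$ and $H_2$ along the flow, the identity $H_2(\eta,0)=(p_1-p_2)\eta$, exclusion of $t_{\max}=+\infty$ via $\ddot u$ eventually bounded above by a negative constant) is a correct and more detailed version of the paper's one-line appeal to the argument of Lemma~\ref{lema:choque}. Only a cosmetic remark: in the $t_{\max}=+\infty$ case the assertion $\dot u\to 0$ is not needed; the bound $\ddot u\leq -c<0$ for large $t$ already contradicts $\dot u>0$.
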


\begin{proof}
Let us consider the system
\[
\dot u = +\sqrt{2(w-V(u))},\ 
    \dot w = p(t)\sqrt{2(w-V(u))}.
\]
For the classical theory of differential equations the Cauchy problem $u(0)=0$, $w(0)=h_0$ has a local unique solution if $h_0>0=V(0)$. That function $u=u(t)$ is defined in some open interval $(0,t^*)$ in which it is also solution of the equation~\eqref{eq} with initial conditions $u(0)=0$ and $\dot u(0)=+\sqrt{2h_0}$. Indeed, from the first equation of the previous system,
\[
\frac{1}{2}\dot u^2=w-V(u).
\]
Multiplying by $\dot u$ and integrating the equation we get
\[
\ddot u(t) \dot u(t) = (p(t)-V(u))\dot u(t),
\]
so we recover equation~\eqref{eq} due to $\dot u(t)\neq 0$ for $t\in(0,t^*)$. The previous system then acts as a regularization of the collision of equation~\eqref{eq}. Indeed, by uniqueness of the initial value problem equation~\eqref{eq} has a unique solution defined in $(0,t^*)$ and coinciding with $u(t)$ in that interval. This is enough to ensure that they coincide everywhere in the interval of definition $(t_0,t_1)$. Here $t_1$ may be infinite. Arguing similarly as in the proof of Lemma~\ref{lema:choque}, the condition on $v_0$ in the statement implies that the solution reach a local maximum $u^*>\eta$. Therefore, the solution of the equation~\eqref{eq} satisfies the assumptions in Lemma~\ref{lema:choque} and so $t_1$ is finite and $u(t)$ has a collision with finite velocity at $t=t_1$.
\end{proof}

The previous results allow to define a~\emph{bouncing solution} of~\eqref{eq} as a continuous function $u:\R\rightarrow[0,+\infty)$ satisfying
\begin{enumerate}[$(a)$]
    \item $Z=\{t\in\R : u(t)=0\}$ is discrete,
    \item for any open interval $I\subset\R\setminus Z$ the function $u$ is in $C^2(I)$ and satisfies equation~\eqref{eq} on $I$,
    \item for each $t_0\in Z$ the limits $\lim_{t\rightarrow t_0^+}\dot u(t)=\dot u(t_0^+)$ and $\lim_{t\rightarrow t_0^-}\dot u(t)=\dot u(t_0^-)$ exist and satisfy $\dot u(t_0^+)=-\dot u(t_0^-)$.
\end{enumerate}

\begin{rem}
We point out that the last item implies that the limit 
\[
\lim_{t\rightarrow t_0}H(u(t),\dot u(t))=h_0
\]
exists. This limit is taken from both sides of $t_0$ and hence the energy function has a well defined value at $t=t_0$. That is, the energy is preserved at the collision. In all arguments velocity and energy play an analogous role. For instance, condition $v_0>\sqrt{2(p_1-p_2)\eta}$ can be replaced by $h_0>(p_1-p_2)\eta$.
\end{rem}

\begin{figure}
    \centering
    \includegraphics[scale=1]{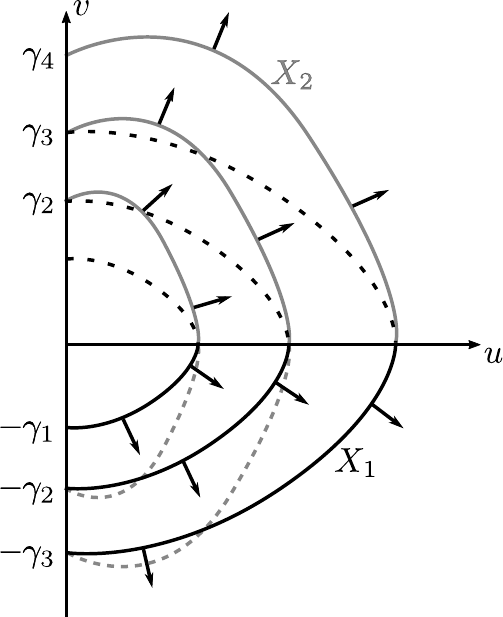}
    \caption{\label{fig2}Scheme of the proof of Lemma~\ref{lema:n-impacts} for $n=4$. Lines in black (bold and dashed) correspond to level curves of the differential system $X_1$. Lines in grey (bold and dashed) correspond to level curves of the differential system $X_2$. Arrows represent the direction of the vector field $X$ on the level curves of $X_1$ and $X_2$.}
\end{figure}

Condition $v_0>\sqrt{2(p_1-p_2)\eta}$ is enough to ensure the occurrence of at least one collision, but subsequent impacts are not ensured. The following lemma implies that solutions with sufficiently high energy exhibit an arbitrary number of collisions.

\begin{lem}\label{lema:n-impacts}
 Assume that $p(t)$ is Lipschitz-continuous. For any $n\in \mathbb{N}$, there exists $\gamma_n>0$ such that if $t_0$ and $v_0$ are two numbers with $v_0>\gamma_n$, the unique solution of~\eqref{eq} has at least $n$ impacts.
\end{lem}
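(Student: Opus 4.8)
The plan is to proceed by induction on $n$, using the comparison with the two autonomous systems $X_1$ and $X_2$ exactly as in Lemma~\ref{lema:choque}, but now bookkeeping the energy loss (or gain) at each bounce. Recall from Lemma~\ref{lema:regularizacion} that a single collision is guaranteed as soon as the velocity at the previous collision exceeds $\sqrt{2(p_1-p_2)\eta}$, equivalently as soon as the energy (measured with respect to $H_1$, say) exceeds $(p_1-p_2)\eta$. So the whole point is to control how much the $H_1$-energy can decrease between two consecutive impacts; if we can bound this loss by a fixed constant, then starting with energy larger than that constant times $n$ (plus the threshold) forces $n$ impacts.

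\medskip

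First I would set up the one-step estimate. Suppose $u$ is a solution which at time $s_0$ has a collision, $u(s_0)=0$ and $\dot u(s_0^+)=w>\sqrt{2(p_1-p_2)\eta}$. By Lemma~\ref{lema:regularizacion} it reaches a maximum $u^*>\eta$ and collides again at some $s_1>s_0$ with velocity $\dot u(s_1^-)=w'<0$. On $[s_0,s_1]$ the solution stays trapped between the level curve $\{H_1=H_1(0,w)\}$ and the level curve $\{H_2=H_2(0,w)\}$, because $\langle X,\nabla H_1\rangle = v(p(t)-p_1)$ has the sign opposite to $v$ and $\langle X,\nabla H_2\rangle=v(p(t)-p_2)$ has the same sign as $v$, just as in Lemma~\ref{lema:choque}. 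Evaluating $H_1$ at the two collision points and using that the trajectory lies outside $\{H_1=H_1(0,w)\}$ on the way up and inside it on the way down — more precisely, tracking $\langle X,\nabla H_1\rangle$ over the full excursion — one gets $\tfrac12 (w')^2 \geq \tfrac12 w^2 - C$ for a constant $C$ depending only on $p_1,p_2,\eta$ (in fact $C$ of the order $(p_1-p_2)u^*$, but $u^*$ itself is controlled by $H_2(0,w)$, so one must be a little careful — see below). The cleanest route is to integrate $\frac{d}{dt}H_1(u,\dot u) = \dot u(p(t)-p_1)$ over $[s_0,s_1]$: since $\dot u>0$ then $\dot u<0$ and $|p(t)-p_1|\le p_1-p_2$, the total variation of $H_1$ along the orbit is at most $(p_1-p_2)\int_{s_0}^{s_1}|\dot u|\,dt = (p_1-p_2)\cdot 2u^*$, and $u^*$ is bounded by the positive root of $H_2(u,0)=H_2(0,w)$, hence grows only linearly in $w^2$ — so this naive bound is not quite uniform. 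Better: split the excursion at the maximum. On the ascending part $\dot u>0$ so $H_1$ is \emph{decreasing}; on the descending part $\dot u<0$ so $H_1$ is \emph{increasing}. Thus $H_1$ at the second collision is at least $H_1$ at the maximum, which is $V_1(u^*)\ge V_1(\eta)$, a fixed constant. So in fact $\tfrac12(w')^2 = H_1(0,w') \ge V_1(\eta) \ge 0$, and more usefully, comparing $H_1(0,w)$ at the first collision with $H_1$ at the maximum (still on the descending... no): on the ascending part $H_1$ drops from $H_1(0,w)$ to $V_1(u^*)$, and on the descending part it rises from $V_1(u^*)$ back up to $H_1(0,w')$. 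The rise on the descending branch equals $(p_1-p_2)$ times something no larger than the drop's worst case; a symmetric comparison with $X_2$ bounds the descending branch's length, giving $u^* \le u^*_1$ where $u^*_1$ is the turning point of the $X_1$-orbit through $(0,w)$, i.e. $V_1(u^*_1)=\tfrac12 w^2$. The asymmetry $H_2-H_1$ across the orbit is exactly $(p_1-p_2)u$, bounded by $(p_1-p_2)u^*_1$, and $u^*_1 \sim w^2/(-p_1)$ for large $w$. Hence the energy loss per bounce is $O(w^2)$ — still not uniform.

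\medskip

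This is the main obstacle, and I expect the resolution to be that one should \emph{not} ask for a uniform additive bound but rather a multiplicative one: the $H_1$-energy after one bounce is at least $c\cdot H_1$ before the bounce for some fixed $c\in(0,1)$ (for large energies), or at least $H_1^{(k+1)} \ge H_1^{(k)} - (p_1-p_2)u^*_k$ with $u^*_k \le (H_1^{(k)} + c')/(-p_1) \cdot(1+o(1))$, which after rearranging shows $H_1^{(k+1)} \ge \big(1 - \tfrac{p_1-p_2}{-p_1}\big)H_1^{(k)} - c'' = \tfrac{-p_2}{-p_1}H_1^{(k)} - c''$... and since $-p_2 \ge -p_1$ the factor $-p_2/(-p_1) \ge 1$, so actually the energy cannot decay at all beyond a fixed additive constant! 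That is the clean statement: there is a constant $C_0=C_0(p_1,p_2,\eta)$ with $H_1^{(k+1)} \ge H_1^{(k)} - C_0$ is false in the naive form but the sharper accounting using $H_2 - H_1 = (p_1-p_2)u \le (p_1-p_2)u^*$ together with $\tfrac12 (w')^2 = H_1(0,w') = H_2(0,w') - 0$... I would at this point simply \emph{define} $\gamma_n$ recursively: set $\gamma_1 = \sqrt{2(p_1-p_2)\eta}$ (the threshold from Lemma~\ref{lema:regularizacion}), and given $\gamma_n$, let $u^*_n$ be the turning point of the $X_1$-orbit with velocity $\gamma_n$ at $u=0$, and set $\gamma_{n+1} = \sqrt{\gamma_n^2 + 2(p_1-p_2)u^*_n}$; then if $v_0>\gamma_{n+1}$, after one bounce the exit velocity exceeds $\gamma_n$ (by the trapping-region argument above applied once), and the inductive hypothesis gives $n$ further impacts, for a total of $n+1$. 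The induction closes cleanly and no uniformity is needed — each $\gamma_n$ is an explicit finite number, which is all the statement demands. The key steps to write out, then, are: (1) the base case $n=1$, which is exactly Lemma~\ref{lema:regularizacion}; (2) the one-bounce velocity estimate $\dot u(s_1^+)^2 = \dot u(s_1^-)^2 \ge v_0^2 - 2(p_1-p_2)u^*$ with $u^*$ the maximum, obtained by evaluating $H_1$ at the two collisions and using $H_2 - H_1 = (p_1-p_2)u$ together with the trapping region from Lemma~\ref{lema:choque}; (3) bounding $u^*$ by the $X_1$-turning point of the incoming orbit, hence by a function of $v_0$ only; and (4) the recursive definition of $\gamma_n$ and the induction. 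The only genuinely delicate point is making sure the "first bounce" in step (2) really happens under the weaker hypothesis $v_0>\gamma_{n+1}\ge\gamma_1$ and that the solution is genuinely a bouncing solution in the sense defined above on the interval up to $s_1$ — but that is precisely the content of Lemmas~\ref{lema:choque} and~\ref{lema:regularizacion}, applied with time origin shifted to the previous collision.
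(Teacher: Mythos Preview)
Your overall strategy — recursively constructing the thresholds $\gamma_n$ by comparison with the autonomous systems $X_1$ and $X_2$, so that starting with $v_0>\gamma_{n+1}$ forces the exit velocity after one bounce to exceed $\gamma_n$ — is exactly the paper's approach, and your final recursive formula $\gamma_{n+1}^2=\gamma_n^2+2(p_1-p_2)u^*_n$ (with $u^*_n$ the $X_1$-turning point of the orbit through $(0,\gamma_n)$) coincides with the paper's construction.

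However, the justification you give for this formula does not close. In your steps (2)--(3) you track only $H_1$: you obtain $\dot u(s_1^-)^2\ge v_0^2-2(p_1-p_2)u^*$ and then bound $u^*$ \emph{above} by the $X_1$-turning point $u^+_1(v_0)$. Plugging in, this gives $\dot u(s_1^-)^2\ge v_0^2-2(p_1-p_2)u^+_1(v_0)$, and evaluated at $v_0=\gamma_{n+1}$ the right-hand side equals $\gamma_n^2-2(p_1-p_2)\bigl(u^+_1(\gamma_{n+1})-u^+_1(\gamma_n)\bigr)<\gamma_n^2$, since $u^+_1$ is increasing. So your specific one-bounce estimate is too weak to support your specific recursion.

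The paper's fix is to use the \emph{two} barriers asymmetrically: on the ascending branch use that $H_2$ \emph{increases}, giving the \emph{lower} bound $u^*\ge u^+_2(v_0)$; on the descending branch use that $H_1$ increases, giving $\dot u(s_1^-)^2\ge 2V_1(u^*)$. Combining, $\dot u(s_1^-)^2\ge 2V_1\bigl(u^+_2(v_0)\bigr)$, and since the paper defines $\gamma_{n+1}$ precisely so that $u^+_2(\gamma_{n+1})=u^*_n$, one gets $\dot u(s_1^-)^2\ge 2V_1(u^*_n)=\gamma_n^2$ as desired. You actually gesture at this (``a symmetric comparison with $X_2$''), but the clean split — $H_2$ going up, $H_1$ coming down — is the idea you are missing, and it is what makes the recursion work with the stated formula rather than with some larger, implicitly defined $\gamma_{n+1}$.
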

\begin{proof}
Let us take $\gamma_1>\sqrt{2(p_1-p_2)\eta}$ and consider the solution of $X_1$ that reaches the point $(0,-\gamma_1)$. This solution crosses the $v-$axis backwards in time at some $u=u_1>0$. Now consider the solution of $X_2$ that crosses the point $(u_1,0)$. This solution collides with the singularity $u=0$ backwards in time with some velocity $v=\gamma_2$.

First, we point out that $\gamma_2>\gamma_1$. Indeed,  $H_2(u_1,0)-H_1(u_1,0)=(p_1-p_2)u_1>0$ and $H_1(u_1,0)=H_1(0,\gamma_1)=\frac{1}{2}\gamma_1^2$ and $H_2(u_1,0)=H_2(0,\gamma_2)=\frac{1}{2}\gamma_2^2$. In particular, $\gamma_2>\sqrt{2(p_1-p_2)\eta}$ and Lemma~\ref{lema:regularizacion} implies that the solution of~\eqref{eq} with $u(0)=0$ and $\dot u(0)=v_0>\gamma_2$ has at least one collision.

Second, since $\left<X,\nabla H_1\right>=v(p(t)-p_1)$ and $\left<X,\nabla H_2\right>=v(p(t)-p_2)$, property~\eqref{p_bounded} implies that the solution of~\eqref{eq} impacts the singularity with velocity $-v_1<-\gamma_1$. Thus the bouncing solution is continued by the solution of~\eqref{eq} with initial conditions $u(0)=0$, $\dot u(0)=v_1>\gamma_1$, which ensures at least one collision more. Thus the solution of~\eqref{eq} with $u(0)=0$ and $\dot u(0)=v_0>\gamma_2$ has at least two collisions.

This procedure generates a succession $\gamma_1<\gamma_2<\dots<\gamma_n$ and the solution of~\eqref{eq} with initial conditions $u(0)=0$ and $\dot u(0)=v_0>\gamma_n$ has at least $n$ collisions with the singularity.
\end{proof}

\section{The successor map and the twist condition}

Now we recover the periodicity property of the forcing term $p(t)$. For convenience, let us call $\gamma\!:=\sqrt{2(p_1-p_2)\eta}$. For a given $t_0\in \R$ and $v_0>\gamma$, Lemma \ref{lema:regularizacion} assures that there exists a unique solution $u(t)$ of ~\eqref{eq} such that $u(t_0)=0,\dot u(t_0)=v_0$. Moreover, such solution has a finite interval of definition and vanishes at some time $t_1$. We define the successor map
$$
S:\R\times (\gamma,+\infty)\to\R\times\R^+
$$
$$
(t_0,v_0)\to S(t_0,v_0)=(t_1,-\dot u(t_1^-)).
$$
In the following, we denote $S_1(t_0,v_0)=t_1,\,S_2(t_0,v_0)=-\dot u(t_1^-)$. The map $S$ is one-to-one and continuous in its domain. Moreover, by the $2\pi$-periodic dependence of the equation, one has
$$
S(t_0+2\pi,v_0)=S(t_0,v_0)+(2\pi,0),
$$
and then $(t_0,v_0)$ can be seen as polar coordinates. 

\subsection{The generalized Poincar\'e-Birkhoff Theorem}\label{sub4.1} For completeness, in this subsection we enunciate the version of the Poincar\'e-Birkhoff Theorem that is used in the proofs. This version was presented in \cite{QT} as a variant of the main result of \cite{F}.

\begin{thm}[twist theorem] 
Let $A={\bf S}^1\times[a_1,a_2]$, $B={\bf S}^1\times[b_1,b_2]$ be two annuli in the plane with $A\subset B$. Assume that $f:A\to B$ is an area-preserving homeomorphism such that the area of the two connected components of the complement of $f(A)$ in B is the same as the area of the corresponding components of the complement of A in B. Assume also that $f$ has a lift $\tilde f:\R\times[a_1,a_2]\to \R\times [b_1,b_2]$ of the form
$$
\theta'=\theta+h(\theta,\rho), \quad \rho'=g(\theta,\rho),
$$
where $h,g$ are continuous and $2\pi$-periodic in $\theta$. Then, if the boundary twist condition 
$$
h(\theta,a_1)\cdot h(\theta,a_2)<0 \mbox{ for any } \theta\in [0,2\pi]
$$
holds, $f$ has at least two geometrically distinct fixed points.
\end{thm}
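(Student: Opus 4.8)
This statement is the Poincar\'e--Birkhoff twist theorem in the flexible form of \cite{QT,F}, so the plan is to reduce it to a fixed point theorem for an area-preserving homeomorphism of a \emph{single} closed annulus: first extend $f$ to a self-homeomorphism $\hat f$ of $B$, and then invoke Franks' version of the Poincar\'e--Birkhoff theorem for area-preserving annulus homeomorphisms together with an index count to produce two geometrically distinct fixed points.

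\textbf{Extension step.} Write $R_{in}$ and $R_{out}$ for the two connected components of $B\setminus\operatorname{int}A$, adjacent respectively to the inner boundary $\{\rho=b_1\}$ and the outer boundary $\{\rho=b_2\}$ of $B$. Since $f$ is an area-preserving homeomorphism and $f(A)\subset B$, the image $f(A)$ is an essential closed sub-annulus of $B$, whose complement $B\setminus\operatorname{int}f(A)$ again has two components; the hypothesis on the complementary areas says exactly that these have areas $|R_{in}|$ and $|R_{out}|$. Hence one can choose area-preserving homeomorphisms carrying $R_{in}$ onto the inner component of $B\setminus\operatorname{int}f(A)$ and $R_{out}$ onto the outer one, agreeing with $f$ on $\partial A$, equal to the identity on $\partial B$, and — after composing with a suitable shear supported in the collars — arranged so that neither collar contains a point whose $\hat f$-orbit has zero average rotation. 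Gluing these fillings to $f$ gives an area-preserving homeomorphism $\hat f:B\to B$ isotopic to the identity, with a lift $\tilde{\hat f}$ extending $\tilde f$ and equal to the identity on $\{\rho=b_1\}\cup\{\rho=b_2\}$.

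\textbf{Fixed point extraction.} The circles $\{\rho=a_1\}$ and $\{\rho=a_2\}$ now lie in $\operatorname{int}B$, and there $\tilde{\hat f}$ still satisfies $h(\theta,a_1)\,h(\theta,a_2)<0$; by continuity there are points of $\operatorname{int}B$ with strictly positive, and others with strictly negative, average rotation under $\tilde{\hat f}$. Franks' fixed point theorem \cite{F} for area-preserving homeomorphisms of the open annulus then yields a fixed point of $\hat f$ with rotation number $0$, and by the choice made in the extension it lies in $A$, hence is a fixed point of $f$. For a second, geometrically distinct fixed point I would use the Lefschetz index identity on $B$: the sum of the fixed point indices of $\hat f$ equals $\chi(B)=0$, so an isolated fixed point of non-zero index cannot be the only one; and if all fixed points had index zero, Franks' sharper conclusion (one fixed point forces infinitely many periodic points for an area-preserving annulus map, absent further degeneracy) again provides a second one. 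Either route gives at least two geometrically distinct fixed points of $f$ inside $A$.

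\textbf{Main obstacle.} The delicate point is the extension step carried out \emph{compatibly with the rotation bookkeeping}: the fillings over $R_{in}$ and $R_{out}$ must be simultaneously area-preserving, boundary-matching, and free of zero-rotation fixed points, and it is precisely the hypothesis equating the areas of the complementary components of $f(A)$ with those of $A$ that makes such fillings exist. Without it, $f(A)$ need not leave room for an area-preserving completion, and the rotation-number hypotheses on which Franks' theorem rests would be unavailable. Once the extension is in place, the fixed point count is routine.
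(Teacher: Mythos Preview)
The paper does not prove this theorem at all: it is quoted verbatim from \cite{QT} (itself a variant of \cite{F}) and used as a black box. There is therefore no ``paper's own proof'' to compare with; your proposal is an attempt to reprove the cited result.

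Your overall strategy --- extend $f$ to an area-preserving self-homeomorphism $\hat f$ of $B$ that is the identity on $\partial B$, then appeal to Franks' theorem on the open annulus --- is exactly the route taken in \cite{QT}. However, the execution has two genuine gaps.

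\emph{Extension step.} You claim the fillings over $R_{in}$ and $R_{out}$ can be chosen so that ``neither collar contains a point whose $\hat f$-orbit has zero average rotation.'' As stated this is impossible: you also require $\hat f$ to be the identity on $\partial B$, so every point of $\partial B$ already has rotation number zero. What one actually needs --- and what \cite{QT} arranges --- is that $\hat f$ has no \emph{fixed points} in the open collars $R_{in}\setminus\partial A$ and $R_{out}\setminus\partial A$; this is achievable (for instance by making the radial component of the filling strictly monotone), but it is a different and weaker condition than the one you wrote, and you give no construction for either.

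\emph{Second fixed point.} Your index argument does not close. If the unique fixed point produced by Franks has index zero, the Lefschetz relation $\sum\operatorname{ind}=\chi(B)=0$ is vacuously satisfied and yields nothing. Your fallback, ``one fixed point forces infinitely many periodic points,'' concerns periodic points of arbitrary period, not fixed points, so it does not give a second fixed point of $f$. The honest route is to invoke Franks' theorem in the form actually proved in \cite{F}, which already delivers \emph{two} fixed points once one exhibits a positively returning disk and a negatively returning disk (these come directly from the twist condition on $\rho=a_1$ and $\rho=a_2$); alternatively one uses the chain-recurrence refinement in \cite{F}. Either way, the second fixed point is part of the input theorem, not something to be extracted afterwards by index bookkeeping.
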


\subsection{The successor map is area-preserving}

Consider a sequence $\{\epsilon_n\}_{n\geq 0}$ with $\epsilon_n>0$ and $\epsilon_n\downarrow 0$, and for each $n\geq 0$ the second order differential equation
\begin{equation}\label{regular_eq}
    \ddot u - \frac{1}{(u+\epsilon_n)^{\alpha}}=p(t),
\end{equation}
which is a translation on the $u$-axis of the original equation~\eqref{eq} with the singularity placed at $u=-\epsilon_n$. Every solution $u_{n}$ of equation~\eqref{regular_eq} is a translation of a solution of equation~\eqref{eq}, $u_n(t)=u(t)-\epsilon_n$. By Lemma~\ref{lema:choque} every classical solution $u_n$ with initial conditions $u_n(0)=u(0)-\epsilon_n$ with $u(0)>\eta$ and $\dot u_n(0)=0$ has a finite maximal interval of definition and reach the singularity $u=-\epsilon_n$ forwards and backwards in time with finite velocity. In particular, there exist times $t_{0n}$ and $t_{1n}$ in which $u_n(t_{0n})=u_n(t_{1n})=0$ satisfying $t_0<t_{0n}<0<t_{1n}<t_1$, where $(t_0,t_1)$ is the maximal interval of definition of the classical solution of equation~\eqref{eq} with initial conditions $u(0)>\eta$, $\dot u(0)=0$. Continuous dependence on $\epsilon_n$ of $u_n(t)$ is deduced from the explicit formula $u_n(t)=u(t)-\epsilon_n$. Let us define $w_n(t)$ as $u_n(t)$ for $t\in(t_{0n},t_{1n})$ and identically zero for $t\in(t_0,t_{0n}]\cup[t_{1n},t_1)$. With all the previous comments in mind the following lemma is straightforward.

\begin{lem}\label{lema:convergencia}
The sequences $\{w_n\}_{n\geq 0}$ and $\{\dot w_n\}_{n\geq 0}$ tend respectively to $u$ and $\dot u$ uniformly on $(t_0,t_1)$. Moreover, $t_{0n}\rightarrow t_0$ and $t_{1n}\rightarrow t_1$ as $n\rightarrow +\infty$.
\end{lem}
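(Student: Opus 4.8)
The plan is to reduce the statement to the elementary identity $u_n(t)=u(t)-\epsilon_n$ (relating solutions of~\eqref{regular_eq} to the classical solution of~\eqref{eq}) recorded just before the lemma, together with the qualitative information on $u$ furnished by Lemma~\ref{lema:choque}: $u\in C^2(t_0,t_1)$ is strictly positive on the finite interval $(t_0,t_1)$, it vanishes at the endpoints $t_0,t_1$, and $\dot u$ has finite one-sided limits $\dot u(t_0^+)>0$ and $\dot u(t_1^-)<0$. The first thing to record is the immediate consequence of $u_n=u-\epsilon_n$: differentiating gives $\dot u_n\equiv\dot u$ on $(t_0,t_1)$, so $\sup_{(t_0,t_1)}|u_n-u|=\epsilon_n\to 0$, and on the subinterval $(t_{0n},t_{1n})$ one has $w_n=u_n$ and $\dot w_n\equiv\dot u$ identically.

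The one step that really needs an argument is the convergence of the collision times $t_{0n}\to t_0$, $t_{1n}\to t_1$. By construction $t_{0n}\in(t_0,0)$ and $u_n(t_{0n})=0$, i.e. $u(t_{0n})=\epsilon_n$, and likewise $u(t_{1n})=\epsilon_n$ with $t_{1n}\in(0,t_1)$. I would show that every subsequential limit of $(t_{0n})$ is $t_0$: the sequence lies in the compact set $[t_0,0]$, and if $t_{0n_k}\to\tau\in[t_0,0]$ then continuity of $u$ gives $u(\tau)=\lim\epsilon_{n_k}=0$; since $u>0$ on $(t_0,0]$, this forces $\tau=t_0$. Hence $t_{0n}\to t_0$, and symmetrically $t_{1n}\to t_1$. (One could instead invoke the strict monotonicity of $u$ on each side of the maximum at $t=0$, which is shown inside the proof of Lemma~\ref{lema:choque}: then $u$ restricts to a homeomorphism of $(t_0,0]$ and of $[0,t_1)$ onto $(0,u(0)]$, so that $t_{0n}=(u|_{(t_0,0]})^{-1}(\epsilon_n)\downarrow t_0$ and $t_{1n}\uparrow t_1$, with the clean bound $\sup_{(t_0,t_{0n}]}u=\epsilon_n$.)

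Once the collision times are under control, the rest is bookkeeping. For $w_n\to u$ uniformly on $(t_0,t_1)$: on $(t_{0n},t_{1n})$ one has $|w_n-u|=\epsilon_n$, while on $(t_0,t_{0n}]$ one has $|w_n-u|=u$ with $\sup_{(t_0,t_{0n}]}u\to 0$ (because $t_{0n}\to t_0$ and $u(t_0^+)=0$), and symmetrically on $[t_{1n},t_1)$; hence $\|w_n-u\|_{L^\infty(t_0,t_1)}\to 0$. For the derivatives: any compact $K\subset(t_0,t_1)$ satisfies $K\subset(t_{0n},t_{1n})$ for $n$ large, where $\dot w_n\equiv\dot u$, so $\dot w_n\to\dot u$ uniformly on compact subsets of $(t_0,t_1)$; since in addition $\dot u$ is bounded on $(t_0,t_1)$ (its one-sided limits at the endpoints exist, by Lemma~\ref{lema:choque}) and $\dot w_n$ differs from $\dot u$ only on $(t_0,t_{0n})\cup(t_{1n},t_1)$, a set whose measure tends to $0$, one also gets $\dot w_n\to\dot u$ in $L^p(t_0,t_1)$ for every finite $p$, and the collision velocities converge, $\dot w_n(t_{0n}^+)=\dot u(t_{0n})\to\dot u(t_0^+)$ and $\dot w_n(t_{1n}^-)=\dot u(t_{1n})\to\dot u(t_1^-)$. (Genuine uniform convergence of $\dot w_n$ on all of $(t_0,t_1)$ cannot hold, since $\dot w_n\equiv 0$ on a right-neighbourhood of $t_0$ while $\dot u(t_0^+)>0$; the statement is to be read in the senses just described, which are what the limiting argument in the sequel uses.)

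The main --- indeed the only --- obstacle is the convergence of the collision times, and it is mild: it needs nothing more than strict positivity of $u$ on $(t_0,t_1)$ and its vanishing with finite velocity at the endpoints, both already contained in Lemma~\ref{lema:choque}.
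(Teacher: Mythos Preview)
The paper does not supply a proof of this lemma, declaring it ``straightforward'' from the identity $u_n=u-\epsilon_n$ and the discussion immediately preceding the statement; your argument fills in precisely those details, and the route you take (reducing everything to $u_n=u-\epsilon_n$, then extracting the convergence $t_{0n}\to t_0$, $t_{1n}\to t_1$ from positivity of $u$ on $(t_0,t_1)$ and $u(t_{in})=\epsilon_n$) is exactly what the authors have in mind.

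Your closing parenthetical is also well taken: as literally stated, uniform convergence of $\dot w_n$ to $\dot u$ on the whole of $(t_0,t_1)$ cannot hold, since $\dot w_n\equiv 0$ on $(t_0,t_{0n})$ while $\dot u(t_0^+)>0$ by Lemma~\ref{lema:choque}. The weaker modes you isolate --- uniform convergence on compact subsets, together with $t_{in}\to t_i$ and $\dot w_n(t_{in}^\mp)=\dot u(t_{in})\to\dot u(t_i^\mp)$ --- are exactly what the subsequent pointwise convergence $S_n\to S$ of successor maps actually uses, so the imprecision in the statement is harmless for the paper's purposes.
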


Let us now consider the sequence of successor mappings corresponding to functions $w_n$. That is, for each $(t_0,v_0)\in\R\times(\gamma,+\infty)$ we define
\[
S_n(t_0,v_0)=(t_{1n},-\dot w_n(t_{1n})).
\]
It is clear from Lemma~\ref{lema:convergencia} that $\{S_n\}_{n\geq 0}$ converges point-wise to $S$. A time reversion argument also shows that $\{S_n^{-1}\}_{n\geq 0}$ converges point-wise to $S^{-1}$. The extension of the area-preserving property of $\{S_n\}_{n\geq 0}$ to $S$ is now verbatim the case in~\cite{Tomecek}, what leads to the following statement.

\begin{prop}
The successor mapping $S$ is area-preserving.
\end{prop}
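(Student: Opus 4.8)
The plan is to establish area-preservation for the smooth approximants $S_n$ and then transfer it to $S$ via the pointwise limits $S_n\to S$, $S_n^{-1}\to S^{-1}$ recorded just before the statement.

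For each fixed $n$ the equation~\eqref{regular_eq} has its singularity at $u=-\epsilon_n<0$, so on the half-space $\{u\geq 0\}$ the associated first order system, suspended as $\dot t=1$, $\dot u=v$, $\dot v=(u+\epsilon_n)^{-\alpha}+p(t)$ in the extended phase space $(t,u,v)$, is $C^1$ with vanishing divergence, and hence its flow preserves the volume form $\Omega=dt\wedge du\wedge dv$. By construction $S_n$ is the first-crossing map of this flow from the section $\{u=0,\,v>0\}$ to the section $\{u=0,\,v<0\}$ (both transversal to the field, since $\dot u=v\neq 0$ there) followed by the reflection $(t,v)\mapsto(t,-v)$. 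Using the classical fact that a volume-preserving flow induces, between two transversal sections, a map preserving the flux $2$-form $i_X\Omega$, the first-crossing map preserves the restriction of $i_X\Omega$ to $\{u=0\}$, which equals $-v\,dt\wedge dv$; since the reflection preserves $v\,dt\wedge dv$ as well, $S_n$ preserves the area form $\omega_0:=v\,dt\wedge dv$ on $\R\times\R^+$ (equivalently, it is area-preserving in the time--energy coordinates $(t,\tfrac{1}{2}v^2)$; any such invariant area form is all that the twist theorem of Subsection~\ref{sub4.1} requires). This is precisely the area-preservation computation of~\cite{Tomecek}, which applies here word for word.

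To pass to the limit, fix a bounded open set $A$ contained in the domain of $S$. By Lemma~\ref{lema:convergencia} we have $S_n\to S$ pointwise and, by the time-reversed version, $S_n^{-1}\to S^{-1}$ pointwise, all of them being homeomorphisms; and by the confinement of trajectories between the level curves of $X_1$ and $X_2$ from Lemma~\ref{lema:choque}---uniform in $n$ because $u_n=u-\epsilon_n$---the images $S(A)$ and $S_n(A)$ all lie in one common bounded set, hence have finite $\omega_0$-measure. Writing $\mathbf 1_{S_n(A)}=\mathbf 1_A\circ S_n^{-1}$ and using that $A$ is open, each $y\in S(A)$ satisfies $S_n^{-1}(y)\in A$ for $n$ large, so $\liminf_n\mathbf 1_{S_n(A)}\geq\mathbf 1_{S(A)}$ and Fatou's lemma gives $\omega_0(S(A))\leq\liminf_n\omega_0(S_n(A))=\omega_0(A)$. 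Running the same reasoning for the homeomorphisms $S_n^{-1}\to S^{-1}$ applied to the (open, bounded) set $S(A)$ yields $\omega_0(A)\leq\omega_0(S(A))$, hence equality for all bounded open $A$; by regularity of $\omega_0$ this extends to every Borel subset of the domain, so $S$ is area-preserving.

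The step I expect to be the main obstacle is the limiting argument, not the (classical) area-preservation of the individual $S_n$: one has to rule out escape of the images to infinity---handled by the uniform-in-$n$ estimates of Lemma~\ref{lema:choque}---and be careful with measurability and regularity, which is exactly the part taken verbatim from~\cite{Tomecek}. A minor point worth flagging is that the correct invariant area form for these successor maps is $v\,dt\wedge dv$ (Lebesgue measure in time and energy) rather than Lebesgue measure in $(t,v)$, which only affects the coordinates in which the twist theorem is eventually invoked.
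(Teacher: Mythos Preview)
Your proof is correct and follows essentially the same route as the paper: approximate by the regularized maps $S_n$, observe that these are area-preserving (for the flux form $v\,dt\wedge dv$), and then pass to the limit using the pointwise convergence $S_n\to S$, $S_n^{-1}\to S^{-1}$ from Lemma~\ref{lema:convergencia}---this limiting step is exactly what the paper defers to~\cite{Tomecek}, and you have spelled it out via the Fatou argument. Your flag that the invariant form is $v\,dt\wedge dv$ (equivalently, Lebesgue measure in $(t,\tfrac12 v^2)$) rather than $dt\wedge dv$ is a valid and useful observation that the paper leaves implicit.
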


\subsection{Existence of bouncing periodic orbits}

Now, given natural numbers $m,n$, our objective is to find fixed points of the map
$$
S^n(t_0,v_0)-(2m\pi,0),
$$
that are identified as the initial conditions of $2m\pi$-periodic solution with exactly $n$ impacts in each period.

Our first main result is the following one.

\begin{thm}\label{th1}
Assume that $0<\alpha<1$ and $p(t)$ is a Lipschitz-continuous and $2\pi$-periodic function with negative values. Then, there exists $m_1\in\N$ such that for any $m\geq m_1$, equation~\eqref{eq} has at least two $2m\pi$-periodic solutions with exactly $1$ impact in the period interval $[0,2m\pi)$.
\end{thm}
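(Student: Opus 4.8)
The plan is to apply the twist theorem from Subsection~\ref{sub4.1} to the successor map $S$ restricted to a large but bounded annulus in the $(t_0,v_0)$ half-cylinder, composed $n=1$ times and translated by $(2m\pi,0)$. Since $S$ is area-preserving (by the Proposition above) and satisfies $S(t_0+2\pi,v_0)=S(t_0,v_0)+(2\pi,0)$, the pair $(t_0,v_0)$ plays the role of an angular and a radial coordinate. A fixed point of $S(t_0,v_0)-(2m\pi,0)$ is exactly a pair of initial data $(t_0,v_0)$, $v_0>\gamma$, for which the bouncing solution of~\eqref{eq} launched from a collision at time $t_0$ with velocity $v_0$ returns to a collision at time $t_0+2m\pi$ with the same incoming speed $v_0$; by the reflection rule in the definition of bouncing solution this yields a genuine $2m\pi$-periodic bouncing solution with exactly one impact per period. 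So the whole game is to locate two radii $b_1<b_2$ (with $b_1>\gamma$) at which the \emph{time component} $S_1^{(m)}(t_0,v_0):=S_1(t_0,v_0)-2m\pi$, viewed as the twist $h(t_0,v_0)$, has opposite signs, uniformly in $t_0$.

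\textbf{Step 1: control the return time by the autonomous comparison systems.} For initial data $(t_0,v_0)$ with $v_0>\gamma$, Lemma~\ref{lema:choque} and its proof (via the comparison with $X_1$ and $X_2$) confine the trajectory between the level curves $H_1=\tfrac12 v_0^2$ and $H_2=\tfrac12 v_0^2$ of the two autonomous systems $\ddot u-u^{-\alpha}=p_i$. Consequently the time $S_1(t_0,v_0)-t_0$ between two consecutive collisions of the non-autonomous solution is trapped between $T_{b,2}(\tfrac12 v_0^2)$ and $T_{b,1}(\tfrac12 v_0^2)$, the bouncing half-period functions of $X_1$ and $X_2$ evaluated at energy $\tfrac12 v_0^2$ — more precisely, the incoming/outgoing speeds at the next collision are themselves controlled, so after a short bookkeeping one gets
\[
T_{b}^{(2)}(v_0)\;\leq\; S_1(t_0,v_0)-t_0\;\leq\; T_{b}^{(1)}(v_0)
\]
for two explicit continuous functions depending only on $v_0$, uniformly in $t_0$. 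By Lemma~\ref{lema:funcion_periodo} (applied to each of $p_1,p_2<0$), both $T_{b,i}(h)\to+\infty$ as $h\to+\infty$; hence $S_1(t_0,v_0)-t_0\to+\infty$ as $v_0\to+\infty$, uniformly in $t_0$. This provides the outer radius: choose $b_2$ so large that $S_1(t_0,v_0)-t_0>2m\pi$ for all $t_0$ whenever $v_0\geq b_2$, i.e.\ $h(t_0,b_2)>0$.

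\textbf{Step 2: the inner radius.} For $v_0$ close to $\gamma^+$ the return time stays bounded: the comparison estimate gives $S_1(t_0,v_0)-t_0\leq T_{b,1}(\tfrac12 v_0^2)$, and as $v_0\downarrow\gamma$ the right-hand side tends to the finite value $T_{b,1}\bigl((p_1-p_2)\eta\bigr)$ (note $(p_1-p_2)\eta\geq h^*_1=0$, and $T_{b,1}$ is continuous up to the boundary of the period annulus by Lemma~\ref{lema:period_C1}). Fix any $v_0=b_1\in(\gamma,b_2)$ with $S_1(t_0,v_0)-t_0<2m\pi$ for all $t_0$ — this holds for every $m$ with $2m\pi$ exceeding that finite bound, which is exactly the role of $m_1$: set $m_1$ to be the least integer with $2m_1\pi$ larger than $T_{b,1}\bigl((p_1-p_2)\eta\bigr)$ (or a convenient fixed bound on the return time at radius $b_1$). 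Then $h(t_0,b_1)=S_1(t_0,b_1)-t_0-2m\pi<0$ for all $t_0$ and all $m\geq m_1$, giving the opposite twist.

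\textbf{Step 3: verify the hypotheses of the twist theorem and conclude.} With $A=\mathbf S^1\times[b_1,b_2]$ we have $S^{?}$—more precisely $(t_0,v_0)\mapsto S(t_0,v_0)-(2m\pi,0)$, which has the required lift form $\theta'=\theta+h$, $\rho'=g$ with $h,g$ continuous and $2\pi$-periodic in $\theta=t_0$. One must enlarge $A$ to an outer annulus $B$ into which $A$ maps; since $S$ moves the radial coordinate in a controlled way (again by the $X_1,X_2$ trapping, which bounds the outgoing speed at the next collision in terms of $v_0$), $S(A)-(2m\pi,0)\subset B$ for a suitable $B$, and the area-balance condition on the complementary components follows from area preservation of $S$ exactly as in~\cite{Tomecek,QT}. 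The boundary twist $h(\cdot,b_1)\cdot h(\cdot,b_2)<0$ was established in Steps 1–2. The twist theorem then yields at least two geometrically distinct fixed points, hence two $2m\pi$-periodic bouncing solutions of~\eqref{eq}; by construction each has exactly one collision in $[0,2m\pi)$. Geometric distinctness of the fixed points on the cylinder translates into genuinely distinct solutions (not merely time-shifts), completing the proof.

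\textbf{Main obstacle.} The delicate point is Step~1: turning the qualitative confinement ``the orbit lies between two autonomous level curves'' into a \emph{uniform-in-$t_0$} two-sided bound on the inter-collision time, while simultaneously tracking how the speed at the new collision compares with $v_0$ (this feedback is what lets one iterate for general $n$, and is needed even for $n=1$ to know the map is well defined on the whole annulus and maps it into a controlled $B$). One has to be careful that the comparison argument of Lemma~\ref{lema:choque}, which compares a solution of~\eqref{eq} with the two autonomous solutions sharing its turning point $(u^*,0)$, is applied at the right instants and that monotonicity of $u(t)$ on each descending/ascending branch is genuinely available, so that the time integrals $\int du/\sqrt{2(H_i-V_i(u))}$ indeed sandwich the true return time. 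Everything else — area preservation, the lift structure, the twist theorem's structural hypotheses — is inherited from the earlier sections or is a routine adaptation of~\cite{Tomecek,QT}.
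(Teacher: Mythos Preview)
Your overall strategy is exactly the paper's: apply the Poincar\'e--Birkhoff twist theorem of Subsection~\ref{sub4.1} to the area-preserving successor map $S$ shifted by $(2m\pi,0)$, choosing an inner radius where the return time is uniformly less than $2m\pi$ and an outer radius where it is uniformly greater. Where you diverge is in how you establish the outer twist. You route the estimate $S_1(t_0,v_0)-t_0\to+\infty$ through the comparison with the autonomous systems $X_1,X_2$ and then invoke the monotone growth of the bouncing period functions $T_{b,i}(h)$ from Lemma~\ref{lema:funcion_periodo}. The paper instead proves this in one line (Lemma~\ref{lema-twist}): from $\ddot u>p_2$ one integrates over $(t_0,t_1)$ to get $\dot u(t_1^-)-v_0>p_2(t_1-t_0)$, and since $\dot u(t_1^-)<0$ this gives $t_1-t_0>v_0/(-p_2)$, uniformly in $t_0$. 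For the inner radius the paper simply fixes $v_0=\gamma+1$ and uses continuity and compactness of $[0,2\pi]$ to bound $S_1(t_0,\gamma+1)-t_0$; your use of the upper bound $T_{b,1}$ is also fine but again heavier than needed.

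One small caution about your Step~1: the sandwich $T_{b}^{(2)}(v_0)\leq S_1(t_0,v_0)-t_0\leq T_{b}^{(1)}(v_0)$ with the autonomous periods evaluated at energy $\tfrac12 v_0^2$ is not literally correct as stated, because the comparison in Lemma~\ref{lema:choque} matches the solutions at the \emph{turning point} $(u^*,0)$, not at the collision point $(0,v_0)$, and the $H_i$-energies at those two points differ along the non-autonomous trajectory. What is true (and enough) is that the turning height $u^*$ tends to $+\infty$ with $v_0$, and then the inclusion~\eqref{tiempos} together with $T_{b,2}(H_2(u^*,0))\to+\infty$ gives the desired divergence. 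Your ``short bookkeeping'' can indeed be carried out, but it is longer than the paper's direct integration, which bypasses the period-function machinery entirely for this step.
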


The proof relies the version of Poincar\'e-Birkhoff Theorem presented in subsection \ref{sub4.1} and the following key technical lemma.

\begin{lem}\label{lema-twist}
\[
\lim_{v_0\to+\infty}S_1(t_0,v_0)-t_0=+\infty\qquad\mbox{ uniformly in }t_0\in [0,2\pi].
\]
\end{lem}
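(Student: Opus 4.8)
The plan is to sandwich the solution of~\eqref{eq} between solutions of the two autonomous systems $X_1$ and $X_2$, and then use the fact that the extended period functions of those autonomous systems blow up as the energy tends to infinity (Lemma~\ref{lema:funcion_periodo}). More precisely, fix $t_0$ and consider the solution $u(t)$ of~\eqref{eq} with $u(t_0)=0$, $\dot u(t_0)=v_0>\gamma$. The time $S_1(t_0,v_0)-t_0$ is the time between two consecutive collisions, i.e. the time spent in $\{u>0\}$. By the confinement argument already used in the proof of Lemma~\ref{lema:choque} (namely $\langle X,\nabla H_1\rangle$ has opposite sign to $v$ while $\langle X,\nabla H_2\rangle$ has the same sign as $v$), the trajectory $(u(t),\dot u(t))$ starting at $(0,v_0)$ stays, for $t\geq t_0$ up to the first return to $u=0$, inside the region bounded below by a level curve of $H_1$ and above by a level curve of $H_2$.

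The key comparison is on the \emph{ascending} phase: while $\dot u>0$, the solution of $X_1$ through $(0,v_0)$ lies below $u(t)$, which in turn lies below the solution of $X_2$ through $(0,v_0)$; and symmetrically on the descending phase with the roles reversed. Hence the time for $u(t)$ to travel from $u=0$ back to $u=0$ is at least the time the $X_1$-orbit through energy $h_0=\tfrac12 v_0^2$ takes to go from its collision point up to its maximum, which is $\tfrac12 T_b^{(1)}(h_0)$ where $T_b^{(1)}$ is the bouncing branch of the extended period function of $X_1$ — or, to be safe about which branch bounds which phase, one can bound $S_1(t_0,v_0)-t_0$ from below by a half-period of the $X_1$-flow on energy $h_0$ over one monotone arc, and a symmetric lower bound involving $X_2$ on the other arc. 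Either way, since by Lemma~\ref{lema:funcion_periodo}(a)–(c) (applied with $p_0=p_1$ and with $p_0=p_2$) the relevant branch of $T$ tends to $+\infty$ as $h\to+\infty$, and $h_0=\tfrac12 v_0^2\to+\infty$ as $v_0\to+\infty$, we conclude $S_1(t_0,v_0)-t_0\to+\infty$.

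Uniformity in $t_0\in[0,2\pi]$ is the point that needs a word of care, but it comes essentially for free: the autonomous comparison systems $X_1,X_2$ do not depend on $t_0$ at all, so the lower bound one obtains for $S_1(t_0,v_0)-t_0$ — say $\tfrac12\min\{T_b^{(1)}(\tfrac12 v_0^2),T_b^{(2)}(\tfrac12 v_0^2)\}$ after a correct accounting of ascending/descending arcs — is a function of $v_0$ alone and is independent of $t_0$. (If instead one prefers the cleaner statement "time in $\{u>0\}$ is at least the time the $X_1$-orbit of the same energy spends in $\{u>0\}$", one needs the orbit comparison to hold on the whole excursion, which again uses only the sign of $\langle X,\nabla H_1\rangle$ and no information about $t_0$.) Thus the bound is automatically uniform on $[0,2\pi]$, and periodicity of $p$ makes $[0,2\pi]$ the only relevant range.

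The main obstacle is the orbit-comparison step: one must argue cleanly that the time-parametrized solution $u(t)$ of~\eqref{eq}, not merely its orbit in the $(u,v)$-plane, spends at least as long above $\{u=0\}$ as the corresponding autonomous solution. Because $u(t)$ is confined between the two autonomous level curves and starts with the same velocity $v_0$ at $u=0$, on the rising part $u$ stays to the right of (i.e. larger than) the rising branch of the $X_1$-orbit at each height $v$, so it takes longer to decelerate to $v=0$; the symmetric statement with the falling branch of the $X_2$-orbit handles the descent. Making this rigorous just requires expressing the elapsed time as $\int dv/|\dot v|$ along each monotone arc and comparing $\dot v = u^{-\alpha}+p(t)$ with $\dot v = u^{-\alpha}+p_i$ at matched heights, which is exactly the kind of estimate already carried out in Lemma~\ref{lema:choque}. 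Once that is in place the conclusion is immediate from Lemma~\ref{lema:funcion_periodo}.
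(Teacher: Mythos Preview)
Your approach can be made to work, but it is far more elaborate than needed. The paper dispatches the lemma in three lines: since $\ddot u = u^{-\alpha}+p(t)>p_2$ on $(t_0,t_1)$, a single integration yields $\dot u(t_1^-)-v_0>p_2(t_1-t_0)$; combining this with $\dot u(t_1^-)<0$ gives
\[
S_1(t_0,v_0)-t_0=t_1-t_0>\frac{v_0}{-p_2},
\]
which tends to $+\infty$ uniformly in $t_0$. No period-function machinery, no phase-plane confinement, no appeal to Lemma~\ref{lema:funcion_periodo}.

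Your route---sandwiching the excursion time between the bouncing periods of the autonomous comparison systems and then invoking Lemma~\ref{lema:funcion_periodo}---is sound in spirit, but the arc-by-arc comparison you describe is imprecise. The claim ``at each height $v$, $u$ stays to the right of the rising branch of the $X_1$-orbit'' is ill-posed: the rising branch of a bouncing orbit of $X_i$ is not monotone in $v$ (near the singularity $\dot v=u^{-\alpha}+p_i>0$, so $v$ first \emph{increases} before decreasing), hence a given $v$ may correspond to two values of $u$ on that branch, and the $\int dv/|\dot v|$ comparison you propose does not go through directly. A clean way to execute your strategy is instead to apply Lemma~\ref{lema:choque} at the unique maximum $u^*$ of the excursion: inequality~\eqref{tiempos} then gives $S_1(t_0,v_0)-t_0\geq t_{12}-t_{02}=T_b^{(2)}\bigl(V_2(u^*)\bigr)$, and since $H_2$ is nondecreasing along the flow on the ascending arc one has $V_2(u^*)=H_2(u^*,0)\geq H_2(0,v_0)=\tfrac12 v_0^2$. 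For $v_0$ large this lies in the range where $T_b^{(2)}$ is monotone increasing (Lemma~\ref{lema:funcion_periodo}), and the conclusion follows, uniformly in $t_0$ as you correctly observe. So your plan can be salvaged---but the paper's one-integration argument is clearly the more economical route.
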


\begin{proof}

From the equation and \eqref{p_bounded},
\[
\ddot u(t)>p_2
\]
for all $t\in (t_0,t_1)$. Then, an integration from $t_0$ to $t_1$ gives
\[
\dot u(t_1^-)-v_0>p_2(t_1-t_0).
\]
Now, considering that $\dot u(t_1^-)<0$, we have
\[
v_0<-p_2(t_1-t_0),
\]
and the conclusion is clear if we remember that $p_2<0$ {\text{blue} and $S_1(t_0,v_0)=t_1$}.
\end{proof}

{\noindent\it Proof of Theorem \ref{th1}.} By continuity, we can fix $m_1$ such that 
\begin{equation}\label{desingualdad}
S_1(t_0,\gamma+1) -t_0<2 m_1\pi
\end{equation}
for any $\tau\in [0,2\pi]$. 
Now, for a given $m\geq m_1$, by Lemma \ref{lema-twist} there exists $v_+>\gamma+1$ such that
\[
S_1(t_0,v_+) -t_0>2 m\pi
\]
for any $t_0\in [0,2\pi]$. Now, the result is a direct consequence of the Poincar\'e-Birkhoff Theorem. \qed 

One of the main differences with the attractive case studied in \cite{Or,QT} is that the successive iterations of $S$ are not necessarily well-defined in the repulsive case. However, Lemma~\ref{lema:n-impacts} proves the existence of $\gamma_n$ such that the $n$-th iterate $S^n(t_0,v_0)$ is well-defined for all $v_0>\gamma_n$. With this observation in mind, we can prove the following result.

\begin{thm}\label{th2}
Assume that $0<\alpha<1$ and $p(t)$ is a continuous and $2\pi$-periodic function with negative values. Then, for any natural number $n\geq 2$, there exists $m_n\in\N$ such that, for any $m\geq m_n$, equation \eqref{eq} has at least one $2m\pi$-periodic solution with exactly $n$ impacts in the period internal $[0,2m\pi)$.
\end{thm}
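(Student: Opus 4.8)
The plan is to adapt the strategy of Theorem~\ref{th1} to the $n$-impact setting, the main new ingredient being the control of iterates of the successor map provided by Lemma~\ref{lema:n-impacts}. First I would fix $n\geq 2$ and invoke Lemma~\ref{lema:n-impacts} to produce $\gamma_n>\gamma$ such that whenever $v_0>\gamma_n$ the solution starting at $(t_0,v_0)$ has at least $n$ impacts; consequently the $n$-th iterate $S^n(t_0,v_0)$ is well-defined on the set $\R\times(\gamma_n,+\infty)$. Moreover, the same estimate $\langle X,\nabla H_1\rangle = v(p(t)-p_1)$, $\langle X,\nabla H_2\rangle = v(p(t)-p_2)$ used in Lemma~\ref{lema:n-impacts} shows that the outgoing velocity after each bounce is strictly larger than the incoming one (in absolute value) along the $H_1$ comparison curve, so once $v_0>\gamma_n$ all intermediate velocities stay above $\gamma_n$ as well; this guarantees that $S^n$ maps $\R\times(\gamma_n,+\infty)$ into itself and is a continuous, one-to-one, area-preserving map there (area-preservation being inherited from the proposition on $S$, since a composition of area-preserving maps is area-preserving).

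Next I would establish the two-sided twist for $S^n$ on a suitable annulus $\mathbf{S}^1\times[a_1,a_2]$. For the outer twist, I would show
\[
\lim_{v_0\to+\infty}\bigl(S_1^n(t_0,v_0)-t_0\bigr)=+\infty\qquad\text{uniformly in }t_0\in[0,2\pi].
\]
This follows from Lemma~\ref{lema-twist} applied repeatedly: write $S^n=S\circ S^{n-1}$, and note that the first component of $S^n(t_0,v_0)-t_0$ is a sum of $n$ consecutive inter-impact times; since the incoming velocity at the $k$-th bounce exceeds $\gamma_n$ and, as just noted, the velocities do not decrease along the chain, each inter-impact time is bounded below in terms of the corresponding velocity by the estimate $v_0<-p_2(t_1-t_0)$ from Lemma~\ref{lema-twist}, and at least the first of these blows up as $v_0\to+\infty$. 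Hence for any $m$ large there is $v_+>\gamma_n+1$ with $S_1^n(t_0,v_+)-t_0>2m\pi$ uniformly. For the inner radius I set $a_1=\gamma_n+1$ and, exactly as in the proof of Theorem~\ref{th1}, use continuity to fix $m_n\in\N$ with $S_1^n(t_0,\gamma_n+1)-t_0<2m_n\pi$ for all $t_0\in[0,2\pi]$; for $m\geq m_n$ this is then $<2m\pi$, giving the opposite sign of the twist. Applying the twist theorem from Subsection~\ref{sub4.1} to $S^n(t_0,v_0)-(2m\pi,0)$ on the annulus $\mathbf{S}^1\times[\gamma_n+1,v_+]$ yields a fixed point, which corresponds to a $2m\pi$-periodic bouncing solution; by construction of $\gamma_n$ it has at least $n$ impacts in $[0,2m\pi)$.

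The one remaining point, extracting a solution with \emph{exactly} $n$ impacts rather than at least $n$, I would handle by choosing the annulus so that the outgoing velocities along its trajectories do not reach $\gamma_{n+1}$: more precisely, take $a_2=v_+$ as above but note that the number of impacts a trajectory performs over a fixed time length $2m\pi$ is bounded, since between consecutive impacts the inter-impact time is bounded below by a positive constant depending only on $\gamma_n$ (again from $v<-p_2\,\Delta t$ together with $v>\gamma_n$). Thus a $2m\pi$-periodic solution in this annulus makes a finite number $N\geq n$ of impacts; if one wants precisely $n$ one restricts $v_+$ and enlarges $m_n$ so that the velocity cannot grow enough in $n$ steps to force an $(n+1)$-st impact, using the inner comparison with $X_2$ to bound velocity growth per bounce. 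The main obstacle is exactly this bookkeeping: ensuring that the Poincar\'e--Birkhoff fixed point has the prescribed impact count and that $S^n$ is genuinely well-defined and boundary-respecting on the chosen annulus; the twist estimates themselves are immediate from Lemmas~\ref{lema-twist} and~\ref{lema:n-impacts}, and area-preservation is free from the composition structure.
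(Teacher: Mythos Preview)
Your overall strategy matches the paper's: invoke Lemma~\ref{lema:n-impacts} to make $S^n$ well-defined for $v_0>\gamma_n$, then repeat the twist argument of Theorem~\ref{th1} with $S^n$ in place of $S$ and apply Poincar\'e--Birkhoff. However, two points in your write-up are off.

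First, the claim that the intermediate impact velocities all stay above $\gamma_n$ is incorrect: the construction in Lemma~\ref{lema:n-impacts} shows that starting from $v_0>\gamma_n$ the successive impact speeds are only guaranteed to exceed $\gamma_{n-1},\gamma_{n-2},\dots,\gamma_1$, so they can decrease along the chain. Fortunately you do not need your stronger claim. For area-preservation of $S^n$ it suffices that each intermediate speed exceed $\gamma$ (so that $S$ is defined and area-preserving at each step), which the lemma does give; and the version of Poincar\'e--Birkhoff in Subsection~\ref{sub4.1} allows $f:A\to B$ with $A\subset B$, so there is no need for $S^n$ to map the annulus into itself.

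Second, your third paragraph is unnecessary and rests on a misreading of what a fixed point of $S^n-(2m\pi,0)$ encodes. If $(t_0,v_0)$ is such a fixed point then $S^n(t_0,v_0)=(t_0+2m\pi,v_0)$: the bouncing solution through $(t_0,v_0)$ has its $n$-th impact precisely at time $t_0+2m\pi$ with the same outgoing speed, so by $2\pi$-periodicity of $p$ it is $2m\pi$-periodic with \emph{exactly} $n$ impacts in $[t_0,t_0+2m\pi)$. No further bookkeeping is required. The only subtlety the paper points out, and which you do not mention, is why the conclusion is ``at least one'' rather than ``at least two'': for $n\geq 2$ the two fixed points supplied by Poincar\'e--Birkhoff may be iterates of one another under $S$ and hence correspond to the same bouncing solution viewed from different impacts.
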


\begin{proof}
By the observation above, $S^n(t_0,v_0)$ is well-defined for $v_0>\gamma_n$. Then we can follow exactly the same argument as in the proof of Theorem \ref{th1}, working now with the first component of $S^n(t_0,v_0)$. Again, Poincar\'e-Birkhoff Theorem provides two fixed points of the successor map, the difference is that they may correspond to the same bouncing solution.
\end{proof}

The previous results ensure the existence of sub-harmonic bouncing solutions. In order to guarantee the existence of harmonic bouncing solution we need an accurate statement.

\begin{thm}\label{th3}
Assume that $0<\alpha\leq 1/2$ and $p(t)$ is a continuous and $2\pi$-periodic function satisfying $p(t)\leq p_1<0$. If $\left(\frac{1}{\alpha}\right)^{\frac{\alpha}{1+\alpha}}<-p_1$ then equation~\eqref{eq} has at least two $2\pi$-periodic solutions with exactly $1$ impact in the period interval $[0,2\pi)$.
\end{thm}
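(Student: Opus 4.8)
The plan is to apply the generalized Poincar\'e-Birkhoff theorem of subsection \ref{sub4.1} to the successor map $S$ restricted to a suitable annulus of the form $\mathbf{S}^1\times[\gamma+1,v_+]$, with $n=1$ and $m=1$. Just as in the proof of Theorem \ref{th1}, the outer boundary twist is already provided by Lemma \ref{lema-twist}: there is $v_+>\gamma+1$ with $S_1(t_0,v_+)-t_0>2\pi$ uniformly in $t_0\in[0,2\pi]$. The genuinely new ingredient is the \emph{inner} boundary twist, namely a lower bound on $v_0$ guaranteeing $S_1(t_0,v_0)-t_0<2\pi$ for all $t_0$; this is where the quantitative hypothesis $\left(\frac{1}{\alpha}\right)^{\frac{\alpha}{1+\alpha}}<-p_1$ must enter. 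To make the inner circle of the annulus admissible, one first needs $\gamma+1$ (or whatever inner radius we choose) to lie in the domain $(\gamma,+\infty)$ of $S$, which is automatic, and then to establish the strict inequality on that circle.

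The key step is therefore an upper estimate for the collision time $S_1(t_0,v_0)-t_0$ in terms of $v_0$, uniform in $t_0$. First I would compare the bouncing solution $u(t)$ of \eqref{eq} issuing from $(t_0,v_0)$ with the autonomous solution of $X_1$ (the equation with $p\equiv p_1$) through the same point. Since $p(t)\le p_1$, the argument in Lemma \ref{lema:choque} shows $u$ stays below the corresponding level curve of $H_1$; in particular the maximum height $u^*$ reached by $u$ and the total collision time are dominated by those of the autonomous comparison solution. Thus it suffices to bound the half-period-type quantity
\[
\tau(h)=\sqrt{2}\int_0^{u^+(h)}\frac{du}{\sqrt{h-V_1(u)}},\qquad V_1(u)=-p_1u-\frac{u^{1-\alpha}}{1-\alpha},
\]
and to show $2\tau(h)<2\pi$ once the energy $h$ is large enough, where $h\to+\infty$ as $v_0\to+\infty$ since $h=\tfrac12 v_0^2$. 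For $\alpha\le 1/2$ Lemma \ref{lema:funcion_periodo}(a)–(b) tells us $T(h)=2T_b(h)$ is monotone, but monotone \emph{increasing}, so largeness of $h$ does not by itself make $\tau$ small; rather, I expect the correct statement is that $T_b(h)$ \emph{decreases} down to its infimum, attained in the limit $h\to 0^+$ (the outer boundary of the period annulus), giving $\inf_{h>0}T_b(h)=\lim_{h\to 0^+}T_b(h)$. Hence the inner twist circle should be taken close to the collision boundary $h=0$, i.e.\ at $v_0$ slightly above $\gamma$, and the required inequality becomes
\[
2\,\lim_{h\to 0^+}T_b(h)<2\pi.
\]
Evaluating this limit for $V_1$: rescaling $u=(-p_1)^{-1/\alpha}s$ reduces the period annulus of $X_1$ to that of $\ddot s - s^{-\alpha}=-1$ up to the time factor $(-p_1)^{-(1+\alpha)/(2\alpha)}$, and the quantity $(1/\alpha)^{\alpha/(1+\alpha)}<-p_1$ is exactly the threshold making $2\lim_{h\to0^+}T_b(h)=2\pi(-p_1)^{-(1+\alpha)/(2\alpha)}(1/\alpha)^{\cdots}<2\pi$. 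I would carry out this one explicit computation of the limiting period (for $\alpha=1/2$ it is already in Lemma \ref{lema:funcion_periodo}(b), namely $2\sqrt2\pi(-p_1)^{-3/2}$, and $(1/\alpha)^{\alpha/(1+\alpha)}=2^{1/3}$ checks the exponent bookkeeping) to pin down the constant.

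With both twist inequalities in hand, I would then verify the remaining hypotheses of the twist theorem: $S$ maps the annulus $A=\mathbf{S}^1\times[a_1,a_2]$ (with $a_1$ just above $\gamma$ and $a_2=v_+$) into a larger annulus $B$, it is an area-preserving homeomorphism by the Proposition of subsection ``The successor map is area-preserving,'' the lift has the stated form with $h(\theta,\rho)=S_1(\theta,\rho)-\theta-2\pi$ and the area-balance condition on the complementary components holds for the same reason as in \cite{Tomecek}. The boundary twist condition $h(\theta,a_1)\cdot h(\theta,a_2)<0$ is precisely $\big(S_1-\theta-2\pi<0\big)\cdot\big(S_1-\theta-2\pi>0\big)$, which we have arranged. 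The theorem yields two geometrically distinct fixed points of $S-(2\pi,0)$, i.e.\ two $2\pi$-periodic bouncing solutions, and since we worked on an annulus where $S$ itself (not an iterate) is applied, each fixed point has exactly one impact in $[0,2\pi)$. The main obstacle is the second paragraph: correctly identifying that the relevant infimum of the bouncing period is the limit as $h\to 0^+$ and computing that limit sharply enough to see the threshold $(1/\alpha)^{\alpha/(1+\alpha)}<-p_1$; the monotonicity direction in Lemma \ref{lema:funcion_periodo} must be used with care, since it is increasing, so the inner twist circle lives near the collision boundary rather than at high energy.
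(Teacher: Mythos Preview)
Your overall architecture matches the paper's proof exactly: Poincar\'e--Birkhoff applied to $S$ with the outer twist from Lemma~\ref{lema-twist}, and the inner twist obtained by bounding $S_1(t_0,v_0)-t_0$ above by the bouncing period of the autonomous $p_1$--equation at low energy. The comparison step (confinement between $H_1$ and $H_2$ level curves from Lemma~\ref{lema:choque}) is also the paper's mechanism.

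Where you diverge is in the computation of the inner twist bound, and there is a real gap. First, two slips: the quantity $\tau(h)=\sqrt{2}\int_0^{u^+(h)}(h-V_1)^{-1/2}\,du$ is already the \emph{full} bouncing period $T_b(h)$ (the half--period carries a prefactor $1/\sqrt{2}$, not $\sqrt{2}$), so your required inequality should be $T_b(0^+)<2\pi$, not $2T_b(0^+)<2\pi$; and the relevant parts of Lemma~\ref{lema:funcion_periodo} for $0<\alpha\le 1/2$ are (b)--(c), not (a)--(b). More importantly, you propose to evaluate $\lim_{h\to 0^+}T_b(h)$ directly via the rescaling $u=(-p_1)^{-1/\alpha}s$, but after rescaling you are still left with $\lim_{h\to 0^+}T_b(h)$ for the normalized potential, an $\alpha$--dependent improper integral you do not compute (and for $\alpha<1/2$ it has no simple closed form). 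The paper sidesteps this entirely with a cleaner idea you are missing: by the $C^1$ continuity of Lemma~\ref{lema:period_C1}, $T_b(0^+)=T_p(0^-)$, and since for $0<\alpha\le 1/2$ the \emph{classical} period $T_p$ is constant or decreasing on the period annulus (Lemma~\ref{lema:funcion_periodo}(b)--(c)), one gets
\[
T_b(0^+)=T_p(0^-)\;\le\; T_p(\text{center})=\frac{2\pi}{\sqrt{V''\bigl((-p_1)^{-1/\alpha}\bigr)}}=\frac{2\pi}{\sqrt{\alpha(-p_1)^{(1+\alpha)/\alpha}}}.
\]
The hypothesis $(1/\alpha)^{\alpha/(1+\alpha)}<-p_1$ is precisely the condition that this last expression is $<2\pi$. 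So the threshold comes from the linearized period at the center, not from a direct evaluation of $T_b(0^+)$; your rescaling is consistent with this (the time factor $(-p_1)^{-(1+\alpha)/(2\alpha)}$ is correct) but does not by itself deliver the constant.
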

\begin{proof}
The proof is verbatim the one of Theorem~\ref{th1} with the difference that in this case we need to ensure that the inequality~\eqref{desingualdad} is verified for $m_1=1.$ To do so we first notice that, due to Lemma~\ref{lema:choque}, for any 
$(t_0,v_0)\in\R\times (\gamma,+\infty)$ the quantity $S_1(t_0,v_0)-t_0=t_1-t_0$ is bounded by the length of the interval of definition of the solution of~\eqref{eq} with $p(t)\equiv p_1$ (see~\eqref{tiempos}). Therefore it is enough to prove that the integrable equation with $p(t)\equiv p_1$ has a bouncing solution with period less than $2\pi$.

Equation~\eqref{eq} with $p(t)\equiv p_1$ has a center located at $((-p_1)^{-\frac{1}{\alpha}},0)$ (see Section~\ref{sec:integrable}) and the value of the period function at the center itself is given by
\[
\frac{2\pi}{\sqrt{V''\bigr((-p_1)^{-\frac{1}{\alpha}}\bigl)}}=\frac{2\pi}{\sqrt{\alpha(-p_1)^{\frac{1+\alpha}{\alpha}}}}.
\]
For $0<\alpha\leq 1/2$, Lemma~\ref{lema:funcion_periodo} guarantees that the period function is either constant or monotone decreasing inside the period annulus. Therefore, bouncing solutions of equation~\eqref{eq} with $p(t)\equiv p_1$ near the boundary of the period annulus have bouncing period less or equal than $2\pi/\sqrt{\alpha(-p_1)^{\frac{1+\alpha}{\alpha}}}$ by a continuity argument (see Lemma~\ref{lema:period_C1}). The condition in the statement implies that the previous value is less than $2\pi$.
\end{proof}

The fact that the period function for $1/2<\alpha<1$ is monotone increasing do not allow to ensure the existence of harmonic bouncing solutions using the value of the period at the center. In order to derive an analogous result for this range of $\alpha$ we need the expression of the period at the outer boundary of the period annulus. However the authors are not able to obtain its expression with their methods available.

\section*{Acknowledgements}
We are grateful to Rafael Ortega for bringing to our attention the position-energy system that has been crucial for the regularization of collisions. This work has been realized thanks to the \emph{Agencia Estatal de Investigación} and \emph{Ministerio de Ciencia, Innovación y Universidades} grants MTM2017-82348-C2-1-P and MTM2017-86795-C3-1-P. 
 
\bibliographystyle{abbrv}

\end{document}